 \def\@textbottom{\vskip \z@ \@plus 9.52pt}
 \let\@texttop\relax
\let\originalleft\left
\let\originalright\right
\renewcommand{\left}{\mathopen{}\mathclose\bgroup\originalleft}
\renewcommand{\right}{\aftergroup\egroup\originalright}
\DeclarePairedDelimiter{\abs}{\lvert}{\rvert}
\DeclarePairedDelimiter{\bra}{(}{)}
\DeclarePairedDelimiter{\pra}{[}{]}
\DeclarePairedDelimiter{\set}{\{}{\}}
\DeclarePairedDelimiter{\ip}{\langle}{\rangle}
\DeclareMathAlphabet{\mathup}{OT1}{\familydefault}{m}{n}
\newcommand{\dx}[1]{\mathop{}\!\mathup{d} #1}
\newcommand{\pderiv}[3][]{\frac{\mathup{d}^{#1} #2}{\mathup{d} #3^{#1}}}
\newcommand{\eps}{\varepsilon}
\newcommand{\R}{\mathbf{R}}
\newcommand{\cA}{\ensuremath{\mathcal A}}
\newcommand{\cB}{\ensuremath{\mathcal B}}
\newcommand{\cF}{\ensuremath{\mathcal F}}
\newcommand{\cI}{\ensuremath{\mathcal I}}
\newcommand{\cK}{\ensuremath{\mathcal K}}
\newcommand{\cO}{\ensuremath{\mathcal O}}
\newcommand{\cP}{\ensuremath{\mathcal P}}
\newcommand{\cW}{\ensuremath{\mathcal W}}
\newcommand{\cX}{\ensuremath{\mathcal X}}
\DeclareMathOperator{\arctanh}{arctanh}
\DeclareMathOperator{\Hess}{Hess}
\DeclareMathOperator{\tI}{I}
\DeclareMathOperator{\tII}{II}
\DeclareMathOperator{\tIII}{III}
\DeclareMathOperator{\tIV}{IV}
\DeclareMathOperator{\Ric}{Ric}
\DeclareMathOperator{\Lip}{Lip}
\DeclareMathOperator{\MLSI}{MLSI}
\DeclareMathOperator{\Glauber}{Glauber}
\DeclareMathOperator{\CE}{CE}
\newtheorem{theorem}{Theorem}[section]
\newtheorem{lemma}[theorem]{Lemma}
\newtheorem{proposition}[theorem]{Proposition}
\newtheorem{assumption}[theorem]{Assumption}
\theoremstyle{definition}
\newtheorem{definition}[theorem]{Definition}
\newtheorem{Example}[theorem]{Example}
\theoremstyle{remark}
\newtheorem{remark}[theorem]{Remark}
\numberwithin{equation}{section}
\begin{document}
\title[Curvature for discrete mean-field dynamics]{Entropic curvature and convergence to equilibrium for mean-field dynamics on discrete spaces}

\author{Matthias Erbar}
\address{Insitut f\"ur Angewande Mathematik, Endenicher Allee 60, Universit\"at Bonn }
\email{erbar@iam.uni-bonn.de}
\author{Max Fathi}
\address{CNRS \& Institut de Math\'ematiques de Toulouse \\ Universit\'e de Toulouse \\
118 route de Narbonne, 31062 Toulouse, France}
\email{max.fathi@math.univ-toulouse.fr}
\author{Andr\'e Schlichting}
\address{Insitut f\"ur Angewande Mathematik, Endenicher Allee 60, Universit\"at Bonn }
\email{schlichting@iam.uni-bonn.de}
%
%
%
%
%
%
%
%
%
\date{\today}
\begin{abstract}
  We consider non-linear evolution equations arising from mean-field
  limits of particle systems on discrete spaces. We investigate a
  notion of curvature bounds for these dynamics based on convexity of
  the free energy along interpolations in a discrete transportation
  distance related to the gradient flow structure of the
  dynamics. This notion extends the one for linear Markov chain
  dynamics studied in \cite{EM11}. We show that positive curvature
  bounds entail several functional inequalities controlling the
  convergence to equilibrium of the dynamics. We establish explicit
  curvature bounds for several examples of mean-field limits of various
  classical models from statistical mechanics.
\end{abstract}
\maketitle
%
%
%
%
%

\section{Introduction}
This work is about long-time behavior for mean-field systems on
discrete spaces. Mean-field equations describe the large-scale limit
of interacting particle systems where the total force exerted on any
given particle is the average of the forces exerted by all other
particles on the tagged particle. They are used to describe collective
behavior in many areas of sciences. Examples include the modeling of
granular flows in physics \cite{Benedetto1997} and collective behavior
and self-organization for groups of animals \cite{Degond2014,
  Cattiaux2018}. We refer to \cite{Sznitman1989} for an introduction
to the mathematical theory.

One of the important questions in the mathematical analysis of these equations is their long-time
behavior. In \cite{Carrillo2003}, Carrillo, McCann and Villani
obtained quantitative bounds on the rate of convergence to equilibrium
for McKean-Vlasov equations in a continuous setting of the form
\begin{equation*}
\partial_t\rho = \nabla \cdot[\rho \nabla(S'(\rho) + V + W*\rho)]
\end{equation*}
under strong convexity assumptions on the potentials $S$, $V$ and
$W$. The core idea underlying their method was the fact that the PDE has a
gradient flow structure, i.e.~it can be recast as a gradient descent
equation $\partial_t\rho=-\nabla F(\rho)$ of the free energy
functional $F(\rho)=\int S(\rho)+\int V\dx\rho+\int W*\rho \dx\rho$ in the
space of probability measure with respect to the Kantorovitch-Wasserstein
distance $W_2$, which has a formal Riemannian description via Otto
calculus \cite{Otto2001, OV00}. The use of such structures in the
study of long-time behavior comes from the fact that as soon as the
driving functional satisfies some uniform convexity property (with
respect to the particular metric structure), it must decay
exponentially fast towards its minimal value along solutions of the
evolution equation. Moreover, we can use convexity to derive strong
functional inequalities relating the distance, the entropy functional
and the entropy dissipation functional \cite{OV00}.

\subsection{Setup and main results}
Our main motivation here is to adapt the approach of
\cite{Carrillo2003} to mean-field equations in a discrete setting.  We
consider discrete mean-field dynamics of the form
\begin{equation}\label{e:master}
 \dot{\mu}(t) = \mu(t)Q\bra*{\mu(t)} \;,  
\end{equation}
where $\mu$ is a flow of probability measures on a finite set
$\mathcal{X}$ and $(Q(\mu)_{xy})_{x,y\in\cX}$ is a parametrized
collection of Markov kernels. These dynamics naturally arise as
scaling limits of interacting particles systems on graphs where the
interaction only depends on the normalized empirical measure of the
system (which indeed corresponds to mean-field interactions). They
generalize linear Markov chains on discrete spaces, which orrepsond to
the case where $Q$ is a constant Markov kernel, independent of $\mu$.

While the Wasserstein gradient flow approach works well on continuous
spaces, it fails in the discrete setting, since the Wasserstein
$L^2$-transport distance does not admit any non-trivial absolutely
continuous curve. In our previous work \cite{EFLS16}, we derived a
gradient flow structure for \eqref{e:master} by replacing the role of
the Wasserstein distance with a distance $\cW$ constructed via a
suitable modification of the Benamou-Brenier formula for optimal
transport, extending similar earlier results for linear reversible
Markov chains obtained in \cite{Maas2011, Mielke2013, Chow2012}.
Under the condition that the rates $Q$ are Gibbs with a potential
$K: \cP(\cX)\times \cX \to \R$ (see
Assumption~\ref{ass:GibbsPotential}), i.e. $Q(\mu)$ is reversible with respect to a local Gibbs measure of the form $\pi_x(\mu)=Z(\mu)^{-1}\exp\bigl(-H_x(\mu)\bigr)$, with $H_x$ given in terms of the potential $K$, we showed that this dynamic is
the gradient flow of the free energy functional 
\begin{equation}\label{e:def:FreeEnergy}
 \cF(\mu)=\sum_{x\in\mathcal{X}}\mu_x \log\mu_x + U(\mu), \qquad\text{with}\qquad U(\mu)=\sum_{z\in\mathcal{X}}\mu_z\, K_z(\mu),
\end{equation}
with respect to the distance $\cW$ on the simplex of probability
measures on $\mathcal{X}$, see Proposition~\ref{prop:GF}. This built
up on previous works \cite{BDFR15a, BDFR15b} that showed that
$\mathcal{F}$ is indeed a Lyapunov functional for the flow.  An
archetypical example, which we shall discuss in some details later on,
is the classical Curie-Weiss model, which corresponds to a mean-field
dynamic on a two-point space. Already this easy model exhibits
interesting behavior, such as a phase transition at an explicit
critical value of a temperature parameter.
\smallskip

In the present work, we exploit this gradient flow structure to
analize the long-term behaviour of \eqref{e:master} inspired by the
approach in \cite{Carrillo2003} by investigating convexity properties
of the free energy along discrete optimal transport paths for a
non-linear Markov triple $(\cX,Q,\pi)$ as above. Following the works
of Lott, Sturm and Villani \cite{LV09, S06} for metric measure spaces
and \cite{EM11, Mielke2013} for linear Markov chains, we make the following

\begin{definition}[Entropic Ricci curvature lower bound]
  We say that $(\cX,Q,\pi)$ has \emph{Ricci curvature bounded below by
    $\kappa\in\R$} (for short $\Ric(\cX,Q)\geq\kappa$) if for any
  $\cW$-geodesic $(\mu_t)_{t\in[0,1]}$:
  \begin{align*}
    \cF(\mu_t)\leq (1-t)\cF(\mu_0) + t\cF(\mu_1) -\frac{\kappa}{2}t(1-t)\cW(\mu_0,\mu_1)^2\;.
  \end{align*}
\end{definition}
We will show, see Theorem \ref{thm:Ric-equiv}, that Ricci curvature lower bounds can be characterized in
terms of a discrete Bochner-type inequality by deriving the Hessian of
$\cF$ in the Riemannian structure $\cW$, as well as in terms of the Evolution Variational inequality EVI$_\kappa$ for the solutions to \eqref{e:master}: 
\begin{align*}
  \frac{1}{2}\frac{\mathup{d}^+}{\mathup{d}t} \cW(\mu_t, \nu)^2 + \frac{\kappa}{2}
\cW( \mu_t, \nu)^2 \leq \cF(\nu) - \cF(\mu_t)\;.
\end{align*}
Further, we will show that a positive lower bound on the Ricci
curvature entails a number of functional inequalities that control the
convergence to equilibrium of the mean-field systems. These involve a discrete Fisher information functional $\cI:\cP(\cX)\rightarrow[0,\infty]$ given by
  \begin{align*}
    \mathcal{I}(\mu) = \frac{1}{2} \sum\limits_{x,y} \Theta\bra*{\mu_{x}Q_{xy}(\mu), \mu_{y}Q_{yx}(\mu)}\;,\qquad \Theta(a,b)=(a-b)(\log a-\log b)\;,
  \end{align*}
which arises from the dissipation of $\cF$ along solutions to \eqref{e:master} as $\frac{d}{dt} \cF(\mu_t)=-\cI(\mu_t)$.
One of our main results is the following theorem which can be seen as a discrete analog of \cite[Thm.~2.1]{Carrillo2003}.
\begin{theorem}\label{thm:intro-CMV}
 Assume that $\Ric(\cX,Q,\pi)\geq \lambda$ for some $\lambda>0$. Then the following hold:
 \begin{itemize}
 \item[(i)] there exists a unique stationary point $\pi^*$ for the evolution \eqref{e:master}, it is the unique minimizer of $\cF$. Let $\cF_*(\cdot):=\cF(\cdot)-\cF(\pi^*)$;
 \item[(ii)] the \emph{modified logarithmic Sobolev inequality} with constant $\lambda>0$ holds, i.e.~for all $\mu\in \cP(\cX)$,
    \begin{align*}\tag*{$\MLSI(\lambda)$}
      \cF_*(\mu)\leq \frac{1}{2\lambda}\cI(\mu)\;;
    \end{align*}
\item[(iii)] for any solution $(\mu_t)_{t\geq0}$ to \eqref{e:master} we have exponential decay of the free energy:
  \begin{align*}
    \cF_*(\mu_t)\leq e^{-2\lambda t} \cF_*(\mu_0)\;;
  \end{align*}
 \item[(iv)] the \emph{entropy-transport inequality} with constant $\lambda>0$ holds, i.e.~for all $\mu\in\cP(\cX)$,
    \begin{align*}\tag*{ET$(\lambda)$}
      \cW(\mu,\pi_*) \leq \sqrt{\frac{2}{\lambda}\cF_*(\mu)}\;.
    \end{align*}
 \end{itemize}
\end{theorem}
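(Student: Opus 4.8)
The plan is to derive all four statements from the single hypothesis that $\cF$ is $\lambda$-geodesically convex along $\cW$-geodesics, together with the gradient-flow dissipation identity $\frac{d}{dt}\cF(\mu_t)=-\cI(\mu_t)$ from Proposition~\ref{prop:GF} and the fact that the metric slope of $\cF$ is governed by the Fisher information. I would begin with (i). Since $\cX$ is finite, the simplex $\cP(\cX)$ is $\cW$-compact and $\cF$ is continuous, so $\cF$ attains its minimum; uniqueness of the minimizer $\pi^*$ is immediate from strong convexity, for if $m_0\neq m_1$ both minimized $\cF$, then evaluating the convexity inequality along a connecting $\cW$-geodesic at $t=\tfrac12$ would force $\cF(m_{1/2})<\min\cF$, a contradiction. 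I would defer identifying $\pi^*$ with the unique stationary point of \eqref{e:master} to the very end, once $\MLSI(\lambda)$ is available.

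Second, I would establish the HWI-type inequality that powers (ii) and (iv). Fix $\mu,\nu\in\cP(\cX)$ and let $(\mu_t)$ be a constant-speed $\cW$-geodesic with $\mu_0=\mu$, $\mu_1=\nu$. Rewriting the convexity inequality as a difference quotient and letting $t\to0^+$ gives $\frac{d}{dt}\big|_{0^+}\cF(\mu_t)\le\cF(\nu)-\cF(\mu)-\frac{\lambda}{2}\cW(\mu,\nu)^2$. Bounding the left-hand side below by $-|\partial\cF|(\mu)\,\cW(\mu,\nu)$ and using the slope identity $|\partial\cF|^2=\cI$, which is exactly the content of $\frac{d}{dt}\cF(\mu_t)=-\cI(\mu_t)$ read along the gradient flow, yields
\[
  \cF(\mu)-\cF(\nu)\le\sqrt{\cI(\mu)}\,\cW(\mu,\nu)-\frac{\lambda}{2}\cW(\mu,\nu)^2 .
\]
Taking $\nu=\pi^*$, so that the left side is $\cF_*(\mu)$, and maximizing the right side over $\cW(\mu,\pi^*)\ge0$, with optimum at $\sqrt{\cI(\mu)}/\lambda$, produces $\cF_*(\mu)\le\frac{1}{2\lambda}\cI(\mu)$, which is $\MLSI(\lambda)$, i.e.\ statement (ii).

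Third, for (iv) I would use the same convexity inequality but along the geodesic issuing from the minimizer: with $\sigma_0=\pi^*$, $\sigma_1=\mu$ and $D=\cW(\pi^*,\mu)$, combining $\cF(\pi^*)\le\cF(\sigma_t)$ with convexity and letting $t\to0^+$ gives $\cF(\pi^*)\le\cF(\mu)-\frac{\lambda}{2}D^2$, that is $\frac{\lambda}{2}\cW(\mu,\pi^*)^2\le\cF_*(\mu)$, which is $\mathrm{ET}(\lambda)$; notably this step uses only minimality of $\pi^*$ and not the slope identity. For (iii) I would combine $\MLSI(\lambda)$ with the dissipation identity: along any solution $\frac{d}{dt}\cF_*(\mu_t)=-\cI(\mu_t)\le-2\lambda\,\cF_*(\mu_t)$, so Gr\"onwall's lemma gives $\cF_*(\mu_t)\le e^{-2\lambda t}\cF_*(\mu_0)$. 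Finally I close (i): a stationary point $\mu$ of \eqref{e:master} has vanishing dissipation $\cI(\mu)=0$, whence $\MLSI(\lambda)$ forces $\cF_*(\mu)=0$ and thus $\mu=\pi^*$, while conversely $\pi^*$ is stationary as the critical point of $\cF$.

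The main obstacle is the second step: rigorously justifying the HWI inequality, and in particular the identification of the metric slope $|\partial\cF|$ with $\sqrt{\cI}$ and the passage to the one-sided derivative $\frac{d}{dt}\big|_{0^+}\cF(\mu_t)$ along a geodesic. The difficulty specific to the discrete distance $\cW$ is that geodesics may run into the boundary of the simplex, where the Onsager mobility weights degenerate and $\cF$ and $\cI$ need not be smooth, so the chain rule and the slope bound require care; for the target $\pi^*$ this is mitigated since it is a full-support Gibbs measure. I would handle this either by working in the interior and approximating boundary measures, or, more cleanly, by routing (ii)--(iv) through the equivalent $\mathrm{EVI}_\kappa$ formulation of Theorem~\ref{thm:Ric-equiv}, from which the exponential $\cW$-contraction, the Talagrand-type bound and the entropy--dissipation inequality follow by the standard Daneri--Savar\'e arguments without ever differentiating $\cF$ along geodesics.
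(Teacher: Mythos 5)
Your proposal is correct in substance, and where it matters most your fallback route through \ref{eq:EVI} is exactly the paper's; but your primary derivations differ from the paper's in two instructive ways. For (ii), the paper never differentiates $\cF$ along a geodesic and never invokes metric slopes: it applies \ref{eq:EVI} at $t=0$ with the gradient-flow curve started at $\mu$, and bounds $-\tfrac12\tfrac{\mathup{d}^+}{\mathup{d}t}\big|_{t=0}\cW(\mu_t,\nu)^2$ by $\sqrt{\cI(\mu)}\,\cW(\mu,\nu)$ using only the triangle inequality and the action estimate $\cW(\mu_s,\mu)\leq\int_0^s\sqrt{\cI(\mu_r)}\dx{r}$ along the flow (where regularity is free, since $\cI(\mu)<\infty$ forces $\mu\in\cP_*(\cX)$); this gives \ref{eq:FWI}, and Young's inequality then gives \ref{eq:MLSI}, exactly as in your final step. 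Your primary route --- HWI from geodesic convexity plus the identification $|\partial\cF|=\sqrt{\cI}$ --- contains the one real gap, which you flag yourself: the dissipation identity $\tfrac{d}{dt}\cF(\mu_t)=-\cI(\mu_t)$ is \emph{not} ``exactly'' the slope identity. Read along the flow it yields the lower bound $|\partial\cF|\geq\sqrt{\cI}$, whereas HWI needs the reverse inequality $|\partial\cF|\leq\sqrt{\cI}$, which requires the smooth Riemannian computation in the interior together with care at the degenerate boundary. Since you reroute (ii)--(iv) through \ref{eq:EVI}, this is not fatal, but as written that step would not stand on its own.

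Conversely, for (iv) your argument is a genuine simplification of the paper's. The paper proves \ref{eq:ET} by an Otto--Villani-type monotonicity argument: it introduces $G(t)=\cW(\mu,\mu_t)+\sqrt{\tfrac{2}{\lambda}\cF_*(\mu_t)}$ along the flow, shows $G$ is non-increasing using \eqref{eq:deriv-W} and \ref{eq:MLSI}, and passes to the limit $t\to\infty$ via the qualitative convergence of Proposition~\ref{prop:qualitative:longtime}, with an extra approximation step for boundary $\mu$. Your two-line argument --- evaluate the convexity inequality along a geodesic from $\pi^*$ to $\mu$, use $\cF(\pi^*)\leq\cF(\sigma_t)$, divide by $t$ and let $t\to0^+$ --- gives $\tfrac{\lambda}{2}\cW(\mu,\pi^*)^2\leq\cF_*(\mu)$ directly, needs neither \ref{eq:MLSI} nor any long-time analysis, and is valid for all $\mu$ once one grants existence of $\cW$-geodesics, which the paper itself assumes in its proof of (i). Parts (i) and (iii) are essentially the paper's arguments: uniqueness of the minimizer by strict geodesic convexity, identification with the stationary set via Proposition~\ref{prop:charact:stationary} (the paper runs this identification before proving \ref{eq:MLSI}, you after --- both orders work), and Gronwall from \ref{eq:MLSI} together with the dissipation identity.
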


\subsection{Examples}
We will establish explicit curvature bounds for several examples of
(relatively simple) mean-field dynamics, such as the Curie-Weiss
model, zero-range mean-field dynamcis and misanthrope processes. We
compute a formula for the second derivative of entropy along
geodesics, and generalize techniques developed in \cite{FM16, EHMT17}
to the present non-linear situation in order to bound for bounding
this second derivative. The nonlinearity of the dynamic gives rise to
several extra terms when computing the Hessian of the free energy
functional, which complicates the analysis.  \smallskip 

In the case of
the Curie--Weiss model, we will show that a positive lower curvature
bound holds down to the critical temperature, see Section
\ref{sec:CW}.

Another particular family of dynamic we shall be interested is when
the flux of particles from some site $x$ to a site $y$ is a function
of the particle density at site $y$, that is $f(c_y)$. In the
situation where $f$ is constant, this would correspond to the scaling
limit of independent particles on the complete graph. As in
\cite{FM16, EHMT17}, our approach is in some sense perturbative in
nature, and we shall consider rates of the form $f(r) = T + g(r)$, and
show that if $g$ is not too large in some sense, relative to $T$, then
we can derive a rate of convergence to equilibrium. This is inspired
by recent work of Villemonais \cite{Villemonais2017}, who proved that
the $N$ particle system has a positive Ollivier-Ricci (or coarse
Ricci) curvature (another notion of curvature, corresponding to a
contraction rate for the Markovian dynamic) independently of the
system size, and hence converges to equilibrium in $L^2$ distance, via
a uniform estimate on the Poincar\'e constant of the dynamic. Our
approach has the advantage of yielding rates of convergence in
relative entropy via Theorem \ref{thm:intro-CMV}, which is a strictly
stronger notion of convergence. 

\subsection{Connection to the literature}
The approach of \cite{Carrillo2003} was later extended to other
potentials \cite{Carrillo2006,Bolley2013}. Other approaches developed
later include using uniform convergence estimates for a stochastic
particle approximation \cite{Cattiaux2008} and coupling arguments
\cite{Eberle2016, Eberle2016+}. Without convexity, deriving rates of
convergence can be quite delicate, since there may be multiple
equilibria \cite{Tugaut2013}, unlike what happens for linear
diffusions.

Our approach developed here builds on earlier work \cite{Maas2011,
  Mielke2013, Chow2012} contructing gradient flow structures for
linear Markov chain dynamics and \cite{EM11} studying Ricci curvature
and its impact on functional inequalities in this context. It is also
related to, but different from the one developped in
\cite{Kraaij2016}, which uses convexity of the entropy along a
different type of paths, the so-called entropic interpolations, rather
than geodesic paths, to establish functional inequalities involving
relative entropy. In the continuous setting, entropic interpolations
are regularizations of geodesics in Wasserstein space, but in the
discrete case it seems that the entorpic interpolations of
\cite{Kraaij2016} are related to a gradient flow structure different from the one of \cite{EFLS16} we use here.

\subsection*{Organization}
The plan of the paper is as follows. Section~\ref{s:setup} introduces the
mathematical framework we shall work in. Section~\ref{s:curv} introduces the
notion of curvature bounds in our setting, and contains the
computation of the Hessian for general dynamics. Section~\ref{sec:consequ}
investigates the consequences of Ricci curvature bounds in terms of
functional inequalities and convergence to equilibrium for the
nonlinear dynamics. Finally, Section~\ref{s:bounds} investigates curvature bounds
for several examples of mean-field dynamics inspired by classical
models of statistical physics.

\section{Setup}\label{s:setup}

\subsection{Gradient-flow formulation}

The main definitions and results from~\cite{EFLS16} on which this work
will build are collected in this section. The gradient flow structure
of~\eqref{e:master} is based on the existence of a suitable potential,
which is ensured by the following constraint, which we shall assume
to hold throughout the article. We recall that a rate matrix $Q$ of a Markov chain in the continuous time setting 
satisfies 
\begin{align*}
  \forall x\ne y: Q_{xy}\geq 0 \qquad\text{and}\qquad Q_{xx} = -\sum_{y\ne x} Q_{xy} \;.
\end{align*}
\begin{assumption}\label{ass:GibbsPotential}
  Let $K:\mathcal{P}(\mathcal{X})\times\mathcal{X}\rightarrow\R$ be
  such that for each
  $x\in \mathcal{X}, K_{x}:\mathcal{P}(\mathcal{X})\rightarrow\R$ is a
  twice continuously differentiable. Let $\{Q(\mu)\in \R^{\cX\times \cX}\}_{\mu\in\cP(\cX)}$
  be a family of
  rate matrices that \emph{is Gibbs with respect to the  potential function $K$}, i.e.~for each
  $\mu\in \cP(\cX)$, $Q(\mu)$ is the rate matrix of an irreducible,
  reversible ergodic Markov chain with respect to the probability
  measure
  \begin{equation}\label{e:def:piH}
    \pi_{x}(\mu) = \frac{1}{Z(\mu)}\exp\bra[\big]{-H_{x}(\mu)}\;,
  \end{equation}
  with
  \begin{equation}\label{e:def:potentialU}
      H_{x}(\mu) = \frac{\partial}{\partial \mu_x}U(\mu)\qquad\text{and}\qquad U(\mu)=\underset{x \in \cX}{\sum} \; \mu_x K_x(\mu)\;.
  \end{equation}
  In particular $Q(\mu)$ satisfies the detailed balance condition
  with respect to $\pi(\mu)$, that is for all~$x,y\in \cX$
  \begin{equation}\label{e:DBC}
    \pi_{x}(\mu) Q_{xy}(\mu) = \pi_{y}(\mu) Q_{yx}(\mu)
  \end{equation}
  holds. Moreover, we assume that for each $x,y\in \cX$ the map
  $\mu\mapsto Q_{xy}(\mu)$ is Lipschitz continuous over~$\cP(\cX)$.
\end{assumption}
We will refer to the triple $(\cX,Q,\pi)$ as above for short as a
\emph{non-linear Markov triple}.

The specific form of~\eqref{e:def:piH} with~\eqref{e:def:potentialU}
emerges from the detailed balance condition of an underlying
$N$-particle system, from which the dynamics we are interested arise
in the limit $N\to \infty$ (see \cite{EFLS16}). Associated to a
non-linear Markov triple $(\cX,Q,\pi)$ is the non-linear \emph{master
  equation}
\begin{equation}\label{e:master2}
 \dot{\mu}(t) = \mu(t)Q\bra*{\mu(t)} \;,
\end{equation}
which is the deterministic evolution equation describing the
mean-field limit of the underlying particle system. Based on the above
assumption a gradient flow formulation of~\eqref{e:master2} is
established in~\cite[Proposition 2.13]{EFLS16} as we shall briefly
recall.

Consider the \emph{Onsager operator} $\cK[\mu]: T_\mu^*\cP(\cX) \to T_\mu \cP(\cX)$ given by
\begin{equation*}
 \cK[\mu]\psi(x) := -\frac{1}{2}\sum_{y} \Lambda\bra[\big]{ \mu_x Q_{xy}(\mu), \mu_y Q_{yx}(\mu)}\bigl(\psi(y)-\psi(x)\bigr)\;,
\end{equation*}
where
$\Lambda(a,b) = \int_0^1 a^{1-s} b^s \dx{s} = \frac{a-b}{\log a - \log
  b}$
is the logarithmic mean. Then the master equation can be written in
gradient flow form using the functional $\cF$ from
\eqref{e:def:FreeEnergy}:
\begin{equation}\label{e:GF}
 \frac{d\mu_{t}}{dt} = - \cK[\mu_t] D\cF(\mu_t)\;.
\end{equation}
In other words, \eqref{e:master2} is the gradient flow of $\cF$
with respect to the Riemannian structure on $\cP(\cX)$ induced by the metric
tensor $\cK[\mu]^{-1}$. Since this Riemannian metric degenerates at
the boundary of $\cP(\cX)$ we note the following characterization in
metric terms. We consider the distance function on
$\mathcal{P}(\mathcal{X})$ that is formally induced by the Riemannian
metric $\cK[\mu]^{-1}$, i.e. for $\mu_0,\mu_1\in\cP(\cX)$ we set
\begin{equation*}
\mathcal{W}(\mu_0, \mu_1) := \underset{(\mu, \psi) \in CE}{\inf}  \left(\int_0^1{\mathcal{A}(\mu_t, \psi_t)dt}\right)^{1/2}
\end{equation*}
where CE is the set of curves $(\mu_t, \psi_t)_{t\in[0,1]}$ with $t \rightarrow \mu_t$ continuous, $\psi$ measurable and integrable in time, and satisfying the continuity equation
\begin{equation}\label{e:CE}
\dot{\mu}_t = \cK[\mu_t]\psi_t\;
\end{equation}
in distribution sense, and the action functional $\cA$ is given by
\begin{equation}\label{e:def:MF:action}
 \mathcal{A}(\mu,\psi)=\langle \psi, \cK[\mu]\psi\rangle
=\frac{1}{2}\sum_{x,y}(\psi(y)-\psi(x))^{2}\, \Lambda\bra*{ \mu_x Q_{xy}(\mu), \mu_y Q_{yx}(\mu)},
\end{equation}
\begin{proposition}[Gradient flow structure of the mean-field system]\label{prop:GF}
  Let $(\cX,Q,\pi)$ be a non-linear Markov triple satisfying
  Assumption~\ref{ass:GibbsPotential}. Then any solution to
  \eqref{e:master2} is a gradient flow of $\cF$ with respect to the distance $\cW$.
\end{proposition}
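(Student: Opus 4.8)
The plan is to reduce Proposition~\ref{prop:GF} to two ingredients: first, the purely algebraic verification that the master equation \eqref{e:master2} coincides with the gradient-flow equation \eqref{e:GF}, namely $\dot\mu_t = -\cK[\mu_t]D\cF(\mu_t)$; and second, the identification of this differential identity with the notion of a gradient flow of $\cF$ in the metric sense associated with $\cW$. Since the construction of the Riemannian structure, the finiteness of $\cW$ and the associated metric theory are carried out in \cite{EFLS16}, the essential point to exhibit here is the gradient-structure identity, from which the metric statement follows by the standard equivalence for curves of maximal slope.

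For the algebraic identity, I would first differentiate the free energy \eqref{e:def:FreeEnergy}. Using $H_x(\mu)=\partial_{\mu_x}U(\mu)$ from \eqref{e:def:potentialU} together with the definition \eqref{e:def:piH} of $\pi(\mu)$, one obtains
\begin{align*}
  D\cF(\mu)_x = \log\mu_x + 1 + H_x(\mu) = \log\frac{\mu_x}{\pi_x(\mu)} + c(\mu),
\end{align*}
where $c(\mu)$ is independent of $x$. Since $\cK[\mu]$ acts only through the discrete gradient $\psi(y)-\psi(x)$, the constant $c(\mu)$ drops out, and after inserting the detailed balance relation \eqref{e:DBC} the relevant difference becomes $(D\cF(\mu))(y)-(D\cF(\mu))(x)=\log\frac{\mu_y Q_{yx}(\mu)}{\mu_x Q_{xy}(\mu)}$. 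The key property of the logarithmic mean, $\Lambda(a,b)(\log b-\log a)=b-a$, then collapses each term, giving
\begin{align*}
  -\cK[\mu]D\cF(\mu)(x) = \tfrac12\sum_y\bra*{\mu_y Q_{yx}(\mu) - \mu_x Q_{xy}(\mu)}.
\end{align*}
Discarding the second sum via the rate-matrix row-sum condition $\sum_y Q_{xy}(\mu)=0$ recovers $\bra*{\mu Q(\mu)}_x$, up to the normalization convention fixed in \cite{EFLS16}, so that \eqref{e:GF} and \eqref{e:master2} agree.

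To pass to the metric formulation, I would recall that $\cW$ is the distance induced by the Riemannian tensor dual to $\cK[\mu]$, so that tangent vectors take the form $v=\cK[\mu]\psi$ with squared length $\cA(\mu,\psi)=\langle\psi,\cK[\mu]\psi\rangle$. Using $d\cF(\mu)[\cK[\mu]\psi]=\langle D\cF(\mu),\cK[\mu]\psi\rangle$ and the self-adjointness of $\cK[\mu]$ identifies the Riemannian gradient as $\grad_\cW\cF(\mu)=\cK[\mu]D\cF(\mu)$, whence \eqref{e:GF} is exactly the steepest-descent equation $\dot\mu_t=-\grad_\cW\cF(\mu_t)$. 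Feeding $\psi=D\cF(\mu)$ into \eqref{e:def:MF:action} and using $\Lambda(a,b)(\log a-\log b)^2=\Theta(a,b)$ shows that the squared metric slope of $\cF$ equals the Fisher information $\cI(\mu)$; this yields the dissipation identity $\frac{d}{dt}\cF(\mu_t)=-\cI(\mu_t)$ and, combined with the metric chain-rule inequality $\frac{d}{dt}\cF(\mu_t)\geq-|\dot\mu_t|\,|\grad_\cW\cF|(\mu_t)$, forces the energy-dissipation equality characterizing a curve of maximal slope. Invoking the equivalence between solutions of \eqref{e:GF} and $\cW$-gradient flows established in \cite{EFLS16} then completes the argument.

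The main obstacle is not the interior computation but the degeneracy of the metric tensor $\cK[\mu]^{-1}$ at the boundary of $\cP(\cX)$, where some coordinates $\mu_x$ vanish: there the Riemannian picture breaks down, and one must ensure that $\cW$ remains a genuine distance inducing the correct topology and that the energy-dissipation equality persists up to the boundary. These analytic facts are precisely what \cite{EFLS16} supplies, so that the present proof legitimately reduces to recalling that framework and checking the gradient-structure identity established above.
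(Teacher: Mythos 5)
Your proposal is correct and takes essentially the same route as the paper, which records the Onsager-form identity \eqref{e:GF} and defers the metric formulation to \cite[Proposition 2.13]{EFLS16}: your computation via $D\cF(\mu)_x=\log\bigl(\mu_x/\pi_x(\mu)\bigr)+c(\mu)$, the detailed balance condition \eqref{e:DBC}, and the log-mean collapse $\Lambda(a,b)(\log b-\log a)=b-a$ is exactly the substance of that identification, and delegating the boundary-degeneracy issues and the curve-of-maximal-slope equivalence to \cite{EFLS16} mirrors what the paper itself does. One clarification: the factor $\tfrac12$ you absorb into a ``normalization convention'' is in fact an inconsistency in the paper's displayed definition of $\cK[\mu]$ rather than a feature of \cite{EFLS16} --- consistency with the action formula $\cA(\mu,\psi)=\langle\psi,\cK[\mu]\psi\rangle=\tfrac12\sum_{x,y}(\nabla\psi_{xy})^2\Lambda(\mu)_{xy}$ and with the compact form $\cK[\mu]\psi=-\nabla\cdot\bigl(\Lambda(\mu)\cdot\nabla\psi\bigr)$, where $(\nabla\cdot\Psi)_x=\tfrac12\sum_y(\Psi_{xy}-\Psi_{yx})$, forces the definition of $\cK[\mu]$ without the prefactor $\tfrac12$, after which your collapse computation yields exactly $(\mu Q(\mu))_x$ with no residual constant.
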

The distance $\cW$ and the above gradient flow structure are
extensions of the discrete transport distance constructed in
\cite{Maas2011} and the gradient flow structure of linear Markov chains
to the non-linear case. See \cite[Section 2.3]{EFLS16} for more
background on the construction of the distance $\cW$.

An immediate consequence of the gradient flow formulation~\eqref{e:GF}
is the free energy dissipation relation established in~\cite[Remark
2.14]{EFLS16}:
\begin{equation}\label{e:FED}
  \cF(\mu_t) + \int_{0}^t \cI(\mu_s) \dx{s} = \cF(\mu_0) \qquad\text{for any } t>0 \;.
\end{equation}
Here, the discrete Fisher information or dissipation
$\cI:\cP(\cX)\rightarrow[0,\infty]$ is defined  by
  \begin{equation}\label{e:def:dissipation}
    \mathcal{I}(\mu) = \begin{cases} 
                         \frac{1}{2} \sum\limits_{(x,y)\in E_{\mu}} \Theta\bra*{\mu_{x}Q_{xy}(\mu), \mu_{y}Q_{yx}(\mu)} \;, & \text{ for } \mu\in\cP^*(\cX) \\
                         +\infty  \;, & \text { else}
                       \end{cases} \;,
  \end{equation}
  with
    $\Theta: \R_+ \times \R_+ \to \R_+$ defined by  $\Theta(a,b) = (a-b)(\log a - \log b)$. 
In this framework, the Fisher information can be reinterpreted as the squared modulus of the gradient of the entropy with respect to the discrete transport metric $\mathcal{W}$, i.e.~we have 
\begin{align*}
  \cI(\mu)=\langle D\cF(\mu),\cK[\mu]D\cF(\mu)\rangle\;.
\end{align*}

\subsection{Notation}
We will use the following notation throughout the paper.

Given a function $\psi\in\R^\cX$ we will denote by $\nabla\psi\in\R^{\cX\times\cX}$ its discrete gradient, given by
\begin{align*}
  \nabla\psi_{xy}=\psi_y-\psi_x\;.
\end{align*}
For a function $\Psi\in\R^{\cX\times\cX}$ we denote by $\nabla\cdot\Psi$ its discrete divergence, given by
\begin{align*}
  (\nabla\cdot\Psi)_x=\frac12\sum_{y\in\cX}\Psi_{xy}-\Psi_{yx}\;.
\end{align*}
For $\psi,\phi\in \R^\cX$ and $\Psi,\Phi\in\R^{\cX\times\cX}$ we will denote the Euclidean inner products by
\begin{align*}
  \langle\psi,\phi\rangle=\sum_{x\in\cX}\psi_x\phi_x\;,\qquad  \langle\Psi,\Phi\rangle=\frac12\sum_{x,y\in\cX}\Psi_{xy}\Phi_{xy}\;.
\end{align*}
Then we have the integration by parts formula
\begin{align*}
  \langle\psi,\nabla\cdot\Phi\rangle=-\langle\nabla\psi,\Phi\rangle\;.
\end{align*}
For a functions $\Phi,\Psi$ in $\R^{\cX\times\cX}$, we denote by
$\Phi\cdot\Psi$ the componentwise product. Using the shorthand
notation
$\Lambda(\mu)_{xy}:=\Lambda\bigl(\mu_xQ(\mu)_{xy},\mu_yQ(\mu)_{yx}\bigr)$
we can thus write the continuity equation \eqref{e:CE} and the action functional \eqref{e:def:MF:action} compactly as
\begin{align*}
  \dot\mu_t+\nabla\cdot\bigl(\Lambda(\mu_t)\cdot\nabla\psi_t\bigr)=0\;,\qquad
  \cA(\mu,\psi)=\langle\nabla\psi,\Lambda(\mu)\cdot\nabla\psi\rangle\;.
\end{align*}
We will switch freely between notations for the components of
functions $\psi\in\R^\cX$, $\Psi\in\R^{\cX\times\cX}$ as
$\psi_x,\Psi_{xy}$ or $\psi(x),\Psi(x,y)$ depending on what is more
readable in the presence of other indices, e.g.~a time parameter $t$.

\subsection{Equilibria and qualitative longtime behavior}
From the gradient flow formulation, it is straightforward to obtain
the following characterization of stationary states, which is
completely analog to the McKean-Vlasov equation on $\R^n$
\cite[Proposition 2.4 and Corollary 2.5]{CGPS18}.
\begin{proposition}[Characterization of stationary points]\label{prop:charact:stationary}
  Let $(\cX,Q,\pi)$ be a non-linear Markov triple satisfying
  Assumption~\ref{ass:GibbsPotential}. Then, the following statements
  are equivalent: \begin{enumerate}
   \item $\pi^*$ is a stationary solution to~\eqref{e:master}, that is $\pi^* Q(\pi^*) = 0$.
   \item $\pi^*$ is a fixed point of the map $\mu\mapsto \pi(\mu)$~\eqref{e:def:piH}, that is $\pi^* = \pi(\pi^*)$. 
   \item $\pi^*$ is a critical point of $\cF$~\eqref{e:def:FreeEnergy} on $\cP(\cX)$.
   \item $\pi^*$ is a global minimizer of $\cI$~\eqref{e:def:dissipation}, that is $\cI(\mu^*)=0$.
  \end{enumerate}
  The set of all stationary points $\pi^*$ is denoted by $\varPi^*$.
  
  Moreover, it holds that $\varPi^* \subset \cP^*(\cX)$, i.e. each stationary point has strictly positive density. 
\end{proposition}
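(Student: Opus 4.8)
The plan is to establish all four conditions as equivalent to condition~(2), the fixed-point equation $\pi^*=\pi(\pi^*)$, and then to read off the positivity claim directly from this characterization. The single fact I would use repeatedly is that, by Assumption~\ref{ass:GibbsPotential}, for every fixed $\mu$ the frozen rate matrix $Q(\mu)$ is irreducible and reversible with respect to $\pi(\mu)$ from~\eqref{e:def:piH}: irreducibility guarantees a unique stationary distribution, while reversibility~\eqref{e:DBC} guarantees that $\pi(\mu)$ is stationary. Hence $\pi(\mu)$ is \emph{the} unique stationary distribution of $Q(\mu)$, a remark I will invoke in several of the implications below.

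For the equivalence (1)$\Leftrightarrow$(2), condition~(1) says that $\pi^*$ is stationary for $Q(\pi^*)$, so by the uniqueness remark $\pi^*=\pi(\pi^*)$; conversely, if $\pi^*=\pi(\pi^*)$ then $\pi^*$ is the stationary distribution of $Q(\pi^*)$, i.e.\ $\pi^*Q(\pi^*)=0$. For (2)$\Leftrightarrow$(4) I would use that $\Theta(a,b)\geq 0$ with equality precisely when $a=b$: the identity $\cI(\pi^*)=0$ is therefore equivalent to $\pi^*_xQ_{xy}(\pi^*)=\pi^*_yQ_{yx}(\pi^*)$ for all $x,y$, that is, to detailed balance of $Q(\pi^*)$ with respect to $\pi^*$. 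Detailed balance makes $\pi^*$ reversible, hence stationary, for $Q(\pi^*)$, so by the uniqueness remark this holds exactly when $\pi^*=\pi(\pi^*)$; note moreover that $\cI(\pi^*)=0<\infty$ already forces $\pi^*\in\cP^*(\cX)$ by the definition~\eqref{e:def:dissipation}.

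For (2)$\Leftrightarrow$(3) I would compute, from~\eqref{e:def:FreeEnergy} and~\eqref{e:def:potentialU}, that $\partial_{\mu_x}\cF(\mu)=\log\mu_x+1+H_x(\mu)$, using $H_x=\partial_{\mu_x}U$. The constrained critical-point condition on the simplex is that this expression be independent of $x$; solving and normalising against $\sum_x\mu_x=1$ yields exactly $\mu^*_x=Z(\mu^*)^{-1}\exp(-H_x(\mu^*))=\pi_x(\mu^*)$, and the converse is the same computation read backwards. Finally, the positivity claim $\varPi^*\subset\cP^*(\cX)$ is immediate from~(2): since $\pi_x(\mu)=Z(\mu)^{-1}\exp(-H_x(\mu))>0$ for every $x$ (the potential $H_x$ being finite and continuous on $\cP(\cX)$), any fixed point $\pi^*=\pi(\pi^*)$ has strictly positive entries.

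The one point requiring care, and the main obstacle, is the boundary of the simplex in the analysis of~(3). Because the entropy term forces $\partial_{\mu_x}\cF=\log\mu_x+1+H_x(\mu)\to-\infty$ as $\mu_x\to 0^+$, the free energy can have no critical point on $\partial\cP(\cX)$: every critical point must lie in the interior, which both legitimises the Lagrange-multiplier computation above and reconfirms positivity. I would make this steepness argument precise, for instance by showing that the one-sided directional derivative of $\cF$ into the interior from any boundary point equals $-\infty$, after which the remaining implications are routine.
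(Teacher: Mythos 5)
Your proof is correct and follows essentially the same route as the paper: everything is hubbed at the fixed-point condition (2), with (1)$\Leftrightarrow$(2) and (2)$\Leftrightarrow$(4) resting on uniqueness of the stationary/reversible measure for the irreducible frozen chain $Q(\pi^*)$ together with $\Theta(a,b)=0$ iff $a=b$, and (2)$\Leftrightarrow$(3) on the same first-variation computation of $\cF$ (your Lagrange-multiplier phrasing is equivalent to the paper's derivative along linear interpolations), and positivity read off from the Gibbs form of $\pi(\mu)$. The only substantive addition is your explicit steepness argument excluding critical points on the boundary of the simplex, a point the paper leaves implicit by restricting its computation to $\mu\in\cP^*(\cX)$.
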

\begin{proof}
\emph{(1)$\Leftrightarrow$(2):} Let $\pi^*Q(\pi^*)=0$. The rate matrix $Q^* = Q(\pi^*)$ is by assumption the rate matrix of an irreducible reversible Markov chain with unique reversible measure $\pi(\pi^*)$. In particular, it is also the unique stationary solution to $\pi(\pi^*) Q^*=0$ and hence $\pi^*= \pi(\pi^*)$. If $\pi^*=\pi(\pi^*)$, we calculate using the local detailed balance condition~\eqref{e:DBC} and find
\[
  \sum_{x\in\cX} \pi^*_{x} Q_{xy}(\pi^*) = \sum_{x\in \cX} \pi_x(\pi^*) Q_{xy}(\pi^*) = \sum_{x\in\cX} \pi_y(\pi^*) Q_{yx}(\pi^*) = 0  \;,
\]
since $Q$ is a rate matrix.

\smallskip

\emph{(2)$\Leftrightarrow$(3):} Take $\mu\in \cP^*(\cX)$ and any $\nu\in \cP(\cX)$. Let $\mu_s = (1-s)\mu + s \nu$ the standard linear interpolation. Then, it holds 
\[
 \left.\pderiv{}{s} \cF(\mu_s) \right|_{s=0} = \sum_{x\in \cX} \bra[\Big]{\log \mu_x - 1 + \partial_{\mu_x} U(\mu)} \bra*{\nu_x -\mu_x} = \sum_{x\in \cX} \log \frac{\mu_x}{\pi_x(\mu)}\bra*{\nu_x -\mu_x}  \;,
\]
where we used the relations~\eqref{e:def:piH} and~\eqref{e:def:potentialU}. Now, if $\mu=\pi^* = \pi(\pi^*)$ the right hand side is zero and hence $\pi^*$ a critical point if $\cF$. On the other hand, if the right hand side is zero for all $\nu \in \cP(\cX)$, it follows that $\mu_x = C \pi_x(\mu)$ for a constant $C$. Since $\mu,\pi(\mu)\in \cP^*(\cX)$, we have that $C=1$ and hence critical points are fixed points. 

\smallskip

\emph{(2)$\Leftrightarrow$(4):} Let $\pi^*=\pi^*(\pi)$. Since $\cI(\mu)\geq 0$ for all $\mu\in \cP(\cX)$, we immediately find from the local detailed balance condition~\eqref{e:DBC} that $\cI(\pi^*)=0$. Likewise, any global minimizer~$\pi^*$ satisfies by the definition of $\cI$ that $\pi^*_x Q_{xy}(\pi^*) = \pi^*_y Q_{yx}(\pi^*)$, that is the local detailed balance condition~\eqref{e:DBC}. Since again by assumption $Q(\pi^*)$ has the unique reversible measure $\pi(\pi^*)$, we conclude that $\pi^*=\pi^*(\pi)$. 

\smallskip

Finally, the positivity follows from the definition of $\pi(\mu)$ in~\eqref{e:def:piH} and the assumptions on~$K$ implying that $H$ is finite. Hence, $\pi(\mu)\in\cP^*(\cX)$ for all $\mu\in \cP(\cX)$ implies in particular that $\pi(\pi^*)=\pi^*\in \cP^*(\cX)$.
\end{proof}
Another useful information provided by the gradient flow information
is the free energy dissipation relation~\eqref{e:FED}, which
immediately shows that $\cF$ is a Lyapunov function for the
evolution~\eqref{e:master}. By standard theory, we can conclude the
following qualitative longtime behavior.
\begin{proposition}[Convergence to stationary points]\label{prop:qualitative:longtime}
  Let $Q$ satisfy Assumption~\ref{ass:GibbsPotential}, then
  $c(t) \to \pi^*$ for some $\pi^*\in \varPi^*$ as $t \to \infty$.
\end{proposition}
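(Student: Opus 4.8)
The plan is to run the classical LaSalle invariance argument, using the free energy dissipation relation \eqref{e:FED} as the Lyapunov structure together with compactness of the simplex $\cP(\cX)$. First I would record that the problem is globally well posed with compact, forward-invariant state space: by Assumption~\ref{ass:GibbsPotential} the vector field $\mu\mapsto\mu Q(\mu)$ is Lipschitz on $\cP(\cX)$, and since each $Q(\mu)$ is a rate matrix the simplex is preserved (total mass is conserved because the rows of $Q(\mu)$ sum to zero, and a coordinate reaching $0$ has nonnegative time derivative because the off-diagonal rates are nonnegative). Hence a unique solution $(\mu_t)_{t\geq0}$ exists for all time and stays in the compact set $\cP(\cX)$.

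Next I would exploit the Lyapunov structure. By \eqref{e:FED} the map $t\mapsto\cF(\mu_t)$ is non-increasing, and since $\cF$ is continuous on the compact $\cP(\cX)$ — the entropy term extends continuously to the boundary via $0\log0=0$, and $U$ is continuous by the $C^2$ assumption on $K$ — it is bounded below. Therefore $\cF(\mu_t)\downarrow\cF_\infty$ for some finite $\cF_\infty$, and $\int_0^\infty\cI(\mu_s)\dx{s}=\cF(\mu_0)-\cF_\infty<\infty$. Then I would pass to the $\omega$-limit set $\omega(\mu_0)$, which is nonempty, compact, connected and flow-invariant because the trajectory is precompact. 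The LaSalle conclusion is that $\omega(\mu_0)$ lies in the largest invariant subset of $\{\cI=0\}$; but by the equivalence $(1)\Leftrightarrow(4)$ in Proposition~\ref{prop:charact:stationary} we have $\{\cI=0\}=\varPi^*$, whose points are fixed points, so this set is already invariant and $\omega(\mu_0)\subseteq\varPi^*$. Concretely, for $\mu^*\in\omega(\mu_0)$ the continuity of $\cF$ gives $\cF\equiv\cF_\infty$ on $\omega(\mu_0)$; since the whole orbit through $\mu^*$ stays in $\omega(\mu_0)$ and $\frac{d}{dt}\cF(\mu_t)=-\cI(\mu_t)\leq0$, constancy of $\cF$ along that orbit forces $\cI(\mu^*)=0$, i.e. $\mu^*\in\varPi^*$.

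The delicate point, and the one I expect to be the main obstacle, is upgrading $\omega(\mu_0)\subseteq\varPi^*$ to convergence toward a \emph{single} $\pi^*$. LaSalle by itself only identifies the limit set, and for a gradient-type flow a connected set of equilibria need not collapse to a point when the equilibria form a continuum. The clean way to close the gap is to use that $\varPi^*$ is discrete: then the connected set $\omega(\mu_0)$ must reduce to a single point $\pi^*$, giving $\mu_t\to\pi^*$; this isolatedness is what one verifies directly in the explicit examples treated later (e.g.\ Curie--Weiss). At the bare $C^2$ regularity of Assumption~\ref{ass:GibbsPotential} one cannot invoke a {\L}ojasiewicz gradient inequality to force finite trajectory length, so in full generality the safest rigorous reading of the statement is the subsequential one — already obtained above — namely that every accumulation point of $\mu_t$ is a stationary point in $\varPi^*$.
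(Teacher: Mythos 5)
Your proposal follows essentially the same route as the paper's own proof: Lipschitz well-posedness and forward invariance of the compact simplex $\cP(\cX)$, monotonicity of $\cF$ via the dissipation relation \eqref{e:FED}, and the LaSalle/$\omega$-limit argument combined with Proposition~\ref{prop:charact:stationary} to conclude $\omega(\mu_0)\subseteq\varPi^*$. Your closing caveat is also well taken: the paper's proof stops at exactly the same point --- it shows that every $\omega$-limit point is a stationary solution but gives no argument that $\omega(\mu_0)$ is a single point, so the single-limit reading of the statement rests on precisely the issue you identify (connectedness of $\omega(\mu_0)$ together with discreteness of $\varPi^*$, or a {\L}ojasiewicz-type inequality, neither of which follows from Assumption~\ref{ass:GibbsPotential} alone).
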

\begin{proof}
  The proof follows along standard arguments from the theory of dynamical systems (see for instance~\cite[Section 6]{Teschl}). 
  
  By Assumption~\ref{ass:GibbsPotential}, $Q$ is Lipschitz on
  $\cP(\cX)$, which implies by standard well-posedness for ODEs, that
  the solutions $(\mu_t)_{t\geq 0}$ to~\eqref{e:master} are globally
  defined and generate a semigroup on $\cP(\cX)$. The $\omega$-limit is
  given by
  \[
    \omega(\mu) = \set*{ \nu \in \cP(\cX) : \mu_{t_j} \to \nu \text{ for some sequence } t_j \to \infty } \;.
  \]
  Since $\cP(\cX)$ is compact, each orbit
  $\cO^+(\mu_0) = \bigcup_{t\geq 0} \mu_t$ for any
  $\mu_0 \in \cP(\cX)$ is also compact in $\cP(\cX)$ and the
  $\omega$-limit is non-empty and quasi-invariant, that is for
  $\nu \in \omega(\mu_0)$ it holds
  $\cO^+(\nu) \subseteq \omega(\mu_0)$. 
  Moreover, again thanks to the compactness of $\cP(\cX)$ follows for any
  $\mu_0\in \cP(\cX)$ that
  $\operatorname{dist}_{\cP(\cX)}(\mu_t,\omega(\mu_0)) \to 0$ as
  $t\to \infty$ (see also \cite[Lemma 6.7]{Teschl}). 
  
  Since the free energy functional $\cF$ is continuous on $\cP(\cX)$
  and monotone along the flow, it follows that $\omega(\mu_0)$
  consists of complete orbits along which $\cF$ has the constant value
  $\cF^\infty = \lim_{t\to \infty} \cF(\nu_t)$ with
  $\nu_0\in \omega(\mu_0)$. By the free energy dissipation
  relation~\eqref{e:FED}, it follows that for any
  $\nu_0\in \omega(\mu_0)$ and any $t>0$ we have
  \[
    \cF^\infty + \int_0^t \cI(\nu_s) \dx{s} = \cF^\infty
  \]
  and hence the nonnegativity of $\cI$ and continuity of trajectories
  imply $\cI(\nu_s) = 0$ for all $s\in [0,t]$. Hence, $\omega(\mu_0)$
  consists of all states $\nu$ such that $\cI(\nu)=0$, which by
  Proposition~\ref{prop:charact:stationary} entails
  $\nu \in \varPi^*$ and moreover also that $\nu$ is a stationary solution $\nu Q(\nu)=0$. 
\end{proof}

Our purpose in this work can be summarized as giving sufficient
conditions for which the above statement on convergence to equilibrium
can be made quantitative (but which shall automatically enforce that
$\varPi^*$ contains a single element).

\section{Curvature for non-linear Markov chains}\label{s:curv}

In this section, we introduce a notion Ricci curvature lower bounds
for non-linear Markov chains based on geodesic convexity of the
entropy. This generalizes the notion of curvature for linear Markov
chains developed in \cite{EM11} inspired by the approach of Lott,
Sturm and Villani \cite{LV09, S06} to a synthetic notion of lower
bounds on Ricci curvature for geodesic metric measure spaces.

Let $(\cX,Q,\pi)$ be a non-linear Markov chain according to Assumption~\ref{ass:GibbsPotential}
and let $\cF$ be the associated free energy functional~\eqref{e:def:FreeEnergy}
and $\cW$ the associated transport distance.

\begin{definition}[Entropic Ricci curvature lower bound]
  We say that $(\cX,Q,\pi)$ has \emph{Ricci curvature bounded below by
    $\kappa\in\R$} (for short $\Ric(\cX,Q)\geq\kappa$) if for any
  $\cW$-geodesic $(\mu_t)_{t\in[0,1]}$:
  \begin{align*}
    \cF(\mu_t)\leq (1-t)\cF(\mu_0) + t\cF(\mu_1) -\frac{\kappa}{2}t(1-t)\cW(\mu_0,\mu_1)^2\;.
  \end{align*}
\end{definition}

We will show that a lower bound on the Ricci curvature can be
characterized equivalently by a lower bound on the Hessian of the free
energy functional $\cF$ with respect to the Riemanian structure on $\cP_*(\cX)$
induced by $\cW$, or via an Evolution Variational Inequality for the
non-linear Markov dynamics.

To this end, we first derive the geodesic equation for the distance
$\cW$ as well as an expression for the first variation of the free energy.
 
\begin{lemma}[Geodesic equation]
  Let $(\mu_t)_{t\in [0,1]}$ be a constant speed geodesic contained in
  $\cP_*(\cX)$. Then the unique potential $(\psi_t)_{t\in[0,1]}$ such
  that $(\mu,\psi)\in\CE$ solves
  \begin{align*}
    \dot \psi_t(z) +  \frac{1}{2} \partial_{\mu(z)}\langle\nabla\psi_t,\Lambda(\mu_t)\cdot\nabla\psi_t\rangle=0\;,
  \end{align*}
or explicitely
\begin{equation}\label{e:def:MF:geodesic:vectorfield}
 \dot \psi_t(z) +  \frac{1}{4} \partial_{\mu(z)}\sum_{x,y} (\psi_t(x)-\psi_t(y))^2 \Lambda\bigl(\mu_t(x)Q(\mu_t;x,y),\mu_t(y)Q(\mu_t;y,x)\bigr) = 0\;,
\end{equation}
where $\partial_{\mu(z)}$ is the derivative with respect to $\mu(z)$.
\end{lemma}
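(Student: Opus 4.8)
The plan is to recognize the asserted identity as the Euler--Lagrange equation of the action functional subject to the continuity-equation constraint; equivalently, \eqref{e:def:MF:geodesic:vectorfield} is the second of Hamilton's equations for the Hamiltonian $\cH(\mu,\psi)=\tfrac12\cA(\mu,\psi)$ on $\cP_*(\cX)$, the first being the continuity equation \eqref{e:CE}. Since a constant-speed geodesic minimizes $\int_0^1\cA(\mu_t,\psi_t)\dx t$ among curves with the same endpoints, it is in particular a critical point, and I exploit this. I first record that the potential is well defined: because the geodesic stays in $\cP_*(\cX)$, the weights $\Lambda(\mu_t)_{xy}$ are strictly positive on every edge of the graph, which is connected by irreducibility of $Q(\mu_t)$; hence the operator $\Phi\mapsto\nabla\cdot(\Lambda(\mu_t)\cdot\nabla\Phi)$ has kernel exactly the constants, and the continuity equation $\dot\mu_t+\nabla\cdot(\Lambda(\mu_t)\cdot\nabla\psi_t)=0$ determines $\psi_t$ up to an additive constant. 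As only $\nabla\psi_t$ enters both $\cA$ and \eqref{e:def:MF:geodesic:vectorfield}, this pins down the relevant object.

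For the computation I would introduce a Lagrange multiplier $\phi_t\in\R^\cX$ for the continuity-equation constraint and consider
\[
\cL(\mu,\psi,\phi)=\int_0^1\Bigl[\cA(\mu_t,\psi_t)+\bigl\langle\phi_t,\dot\mu_t+\nabla\cdot(\Lambda(\mu_t)\cdot\nabla\psi_t)\bigr\rangle\Bigr]\dx t,
\]
recalling $\cA(\mu,\psi)=\langle\nabla\psi,\Lambda(\mu)\cdot\nabla\psi\rangle$. Stationarity in $\phi$ returns the continuity equation. Varying $\psi$ freely and integrating by parts in space via $\langle\psi,\nabla\cdot\Phi\rangle=-\langle\nabla\psi,\Phi\rangle$ gives $\nabla\cdot\bigl(\Lambda(\mu_t)\cdot\nabla(\phi_t-2\psi_t)\bigr)=0$, whence $\phi_t=2\psi_t$ up to a constant. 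Finally I would vary $\mu$ along perturbations with $\delta\mu_0=\delta\mu_1=0$: the term $\langle\phi_t,\dot{\delta\mu}_t\rangle$ integrates by parts in time with vanishing boundary contributions to $-2\langle\dot\psi_t,\delta\mu_t\rangle$, while the term coming from the $\mu$-dependence of $\Lambda$ in the constraint, namely $\langle\phi_t,\nabla\cdot(\partial_\mu\Lambda(\mu_t)[\delta\mu_t]\cdot\nabla\psi_t)\rangle$, equals $-2\,\partial_\mu\cA(\mu_t,\psi_t)[\delta\mu_t]$ after inserting $\phi_t=2\psi_t$. Combining with the direct contribution $\partial_\mu\cA(\mu_t,\psi_t)[\delta\mu_t]$ leaves $-\partial_\mu\cA(\mu_t,\psi_t)[\delta\mu_t]-2\langle\dot\psi_t,\delta\mu_t\rangle=0$; reading off the coefficient of $\delta\mu_t(z)$ gives $\dot\psi_t(z)+\tfrac12\partial_{\mu(z)}\cA(\mu_t,\psi_t)=0$, and expanding $\cA$ as in \eqref{e:def:MF:action} produces the explicit form \eqref{e:def:MF:geodesic:vectorfield}.

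The step I expect to be the most delicate is the $\mu$-variation, where the two contributions involving $\partial_\mu\Lambda$ --- one from differentiating the action directly, one from differentiating the constraint --- must combine with exactly the right factor, a cancellation that hinges on the precise value $\phi_t=2\psi_t$ of the multiplier and on careful bookkeeping of the factor $\tfrac12$ and the signs in the divergence convention. The assumption that $(\mu_t)$ is a constant-speed geodesic contained in $\cP_*(\cX)$ is what makes all of this legitimate: away from the boundary the Riemannian tensor $\cK[\mu]^{-1}$ is nondegenerate and $\Lambda(\mu_t)$ depends smoothly on $\mu_t$, so the derivatives $\partial_\mu\Lambda$ exist and the minimizing pair is regular enough --- by the structure of $\cW$ established in \cite{EFLS16} --- to justify the integrations by parts in space and time.
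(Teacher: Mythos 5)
Your proof is correct, and the underlying mechanism is the same first-variation argument as in the paper, but your packaging of it is genuinely different. The paper never introduces a Lagrange multiplier: it perturbs only the measures, considering curves $\mu^s_t$ with fixed endpoints, lets the potentials $\psi^s_t$ be \emph{slaved} to $\mu^s_t$ through the continuity equation (legitimate by the same nondegeneracy you record at the start), and expands $\partial_s\vert_{s=0}A(\mu^s,\psi^s)=0$; the awkward term involving $\partial_s\nabla\psi^s_t$ is then eliminated by testing the $s$-derivative of the continuity equation against $\psi_t$ and integrating by parts in time. That substitution is exactly your multiplier identity $\phi_t=2\psi_t$ in disguise: the factor $2$ the paper picks up from differentiating the quadratic form $\langle\nabla\psi,\Lambda(\mu)\cdot\nabla\psi\rangle$ in $\psi$ is your factor $2$ in $\phi_t$. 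What your route buys is transparency: the Hamiltonian structure ($\cH=\tfrac12\cA$, with Hamilton's equations being the continuity equation together with \eqref{e:def:MF:geodesic:vectorfield}) and the sign and factor bookkeeping are completely explicit. What the paper's route buys is economy of justification: since $\psi^s$ is constructed from $\mu^s$, constrained criticality becomes criticality of a functional of $\mu$ alone, and no multiplier-existence (constraint-qualification) theorem is needed, whereas your argument strictly requires one --- the needed surjectivity of $\delta\psi\mapsto\nabla\cdot\bigl(\Lambda(\mu_t)\cdot\nabla\delta\psi\bigr)$ onto mean-zero functions is, fortunately, precisely the nondegeneracy you already invoke for uniqueness of $\psi_t$, so this is a gap of bookkeeping rather than of substance. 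One further point, common to both write-ups: admissible perturbations $\delta\mu_t$ must sum to zero (they are tangent to the simplex), so stationarity only yields \eqref{e:def:MF:geodesic:vectorfield} up to a time-dependent constant in $z$, which is harmless because it can be absorbed into the additive normalization of $\psi_t$, itself only determined up to constants.
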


 \begin{remark}
   In the case of a linear Markov chain, where $Q$ is independent of
   $\mu$, the expression \eqref{e:def:MF:geodesic:vectorfield}
   simplifies to
 \begin{align*}
   \dot \psi_t(z) +  \frac12\sum_{y} (\psi_t(z)-\psi_t(y))^2 \partial_1\Lambda\bigl(\mu_t(z)Q(z,y),\mu_t(y)Q(y,z)\bigr)Q(z,y) = 0\;,
 \end{align*}
  recovering the geodesic equation derived in \cite[Prop.~3.4]{EM11}.
\end{remark}

\begin{proof}
  Since $\cP_*(\cX)$ is a smooth Riemannian manifold, uniqueness and smoothness of geodesics imply that the curve
  $\mu_t$ is smooth, and that there exists a unique (up to constants) potential
  $\psi_t$ such that $(\mu,\psi)\in\CE$ and achieves in the infimum for the action 
  \begin{align*}
    A(\mu,\psi) = \int_0^1\cA(\mu_t,\psi_t) \dx{t}\;,\qquad \cA(\mu,\psi)=\ip{\nabla\psi,\Lambda(\mu)\cdot\nabla\psi}\;, 
  \end{align*}
	and moreover $\psi$ is then also a smooth curve. 
  We will derive \eqref{e:def:MF:geodesic:vectorfield} as the
  corresponding Euler--Langrange equation. So let
  $\mu^s_t\in\cP_*(\cX)$ for $s\in[-\eps,\eps]$ be a smooth
  perturbation of $\mu$ such that $\mu^s_0=\mu_0$ and $\mu^s_1=\mu_1$
  for all $s$. Let $\psi^s_t$ be the unique potentials such that
  $(\mu^s_\cdot,\psi^s_\cdot)\in\CE$. Note that $\psi^s_t$ is smooth
  in $s$ and $t$. Then we have
  \begin{align}\label{eq:perturb1}
    \left.\pderiv{}{s}\right\vert_{s=0} A(\mu^s,\psi^s) = 0\;.
  \end{align}
  We compute
  \begin{align*}
    \pderiv{}{s} A(\mu^s,\psi^s)
  = 
\int_0^1 2\ip{\nabla\psi_t^s,\Lambda(\mu^s_t)\cdot\partial_s\nabla\psi^s_t} 
  + \ip{\nabla\psi_t^s,\partial_s\Lambda(\mu^s_t)\cdot\nabla\psi^s_t} \dx{t}
   \end{align*}
   From the continuity equation we infer that for any $\phi\in\R^\cX$
   \begin{align*}
     \ip{\phi,\partial_t\partial_s\mu^s_t} =\ip{\phi,\partial_s\partial_t\mu^s_t} = \ip{\nabla\phi,\Lambda(\mu^s_t)\cdot \partial_s\nabla\psi^s_t} + \ip{\nabla\phi,\partial_s\Lambda(\mu^s_t)\cdot\nabla\psi^s_t}\;.  
   \end{align*}
Plugging this into \eqref{eq:perturb1} for $s=0$ and integrating by parts in $t$ yields:
\begin{align*}
  0
   =
\int_0^1 2\, \ip{\partial_t\psi_t,\partial_s\vert_{s=0}\mu^s_t} + \ip{\nabla\psi_t,\partial_s\vert_{s=0}\Lambda(\mu^s_t)\cdot\nabla\psi_t}\dx{t} \;.
\end{align*}
The claim then follows by noting that
\begin{align*}
 \MoveEqLeft{\ip{\nabla\psi_t,\partial_s\vert_{s=0}\Lambda(\mu^s_t)\cdot\nabla\psi_t}}\\
 &=
 \sum_z \partial_s\vert_{s=0}\mu^s_t(z)  \frac{1}{2} \partial_{\mu(z)}\sum_{x,y} (\psi_t(x)-\psi_t(y))^2 \Lambda\bigl(\mu_t(x)Q(\mu_t;x,y),\mu_t(y)Q(\mu_t;y,x)\bigr)\;,
\end{align*}
and using that the perturbation $\partial_s\mu^s_t$ was arbitrary.
\end{proof}
In order to give convenient expressions for the first and second
variation of the free energy~$\cF$ along a geodesic, we introduce the
following notation.

We set 
\begin{equation}
  L_\mu\psi(x):= \sum_yQ(\mu;x,y)\psi_y\;, \qquad   \hat L_\mu \sigma(y):= \sum_x\sigma_xQ(\mu;x,y)\;,            \label{e:curv:Lap}
\end{equation}
and note that $\ip{L_\mu\psi,\sigma}=\ip{\hat L_\mu\sigma ,\psi}$, so
$\hat L_\mu$ is the adjoint of $L_\mu$.  The master equation
\eqref{e:master} then reads $\dot\mu_t=\hat L_{\mu_t}\mu_t$. Note further that we can write
\begin{align*}
  \langle\psi,L_{\mu}\phi\rangle=-\langle\nabla \psi,\bigl(Q(\mu)\pi(\mu)\bigr)\cdot\nabla\phi\rangle\;,
\end{align*}
where we set $\bigl(Q(\mu)\pi(\mu)\bigr)_{xy}=Q(\mu)_{xy}\pi(\mu)_x$, which
is symmetric in $x,y$.

\begin{lemma}[First variation of the free energy]\label{lem:ent-first-var}
 Let $(\mu,\psi)\in\CE$ be a solution to the continuity equation. Then it holds
\begin{equation}\label{e:first:variation:entropy}
 \frac{d}{dt}\cF(\mu_t) =- \langle \mu_t , L_{\mu_t}\psi\rangle\;.
\end{equation}
\end{lemma}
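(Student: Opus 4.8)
The plan is to compute $\frac{d}{dt}\cF(\mu_t)$ directly by differentiating the free energy, use the continuity equation to substitute for $\dot\mu_t$, and then recognize the resulting expression as $-\ip{\mu_t, L_{\mu_t}\psi}$ via integration by parts.

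Let me work out the structure. The free energy is $\cF(\mu) = \sum_x \mu_x\log\mu_x + U(\mu)$. Differentiating along the curve, I get $\frac{d}{dt}\cF(\mu_t) = \sum_z \partial_{\mu(z)}\cF(\mu_t)\,\dot\mu_t(z)$. The key point from the stationary-points proposition (the (2)⟺(3) computation already in the excerpt) is that $\partial_{\mu_z}\cF(\mu) = \log\mu_z + 1 + \partial_{\mu_z}U(\mu) = \log\mu_z + 1 - \log\pi_z(\mu) + \text{const}$... let me be careful. From that earlier computation, $\partial_{\mu_x}\cF(\mu) = \log\mu_x - 1 + \partial_{\mu_x}U(\mu)$, and $H_x = \partial_{\mu_x}U$, so this equals $\log\mu_x - 1 + H_x(\mu) = \log\mu_x - 1 - \log\pi_x(\mu) - \log Z(\mu)$. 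So the effective "potential" driving the gradient flow is $\Psi_z := \log(\mu_z/\pi_z(\mu))$ up to additive constants in $z$.

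The plan is then to write $\frac{d}{dt}\cF(\mu_t) = \ip{\Psi, \dot\mu_t}$ where the additive constants drop out because $\sum_z \dot\mu_t(z) = 0$ (mass conservation). Next I substitute the continuity equation in its compact form $\dot\mu_t + \nabla\cdot(\Lambda(\mu_t)\cdot\nabla\psi_t) = 0$ and integrate by parts using $\ip{\phi,\nabla\cdot\Phi} = -\ip{\nabla\phi,\Phi}$, giving $\frac{d}{dt}\cF(\mu_t) = \ip{\nabla\Psi, \Lambda(\mu_t)\cdot\nabla\psi_t}$. Now the crucial algebraic step: I expand $\nabla\Psi_{xy} = \log\frac{\mu_y}{\pi_y(\mu)} - \log\frac{\mu_x}{\pi_x(\mu)}$ and multiply by the logarithmic mean $\Lambda(\mu_xQ_{xy},\mu_yQ_{yx})$. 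Using detailed balance $\pi_xQ_{xy}=\pi_yQ_{yx}$, one checks that $\mu_xQ_{xy}/(\mu_yQ_{yx}) = (\mu_x/\pi_x)/(\mu_y/\pi_y)$, so $\log\frac{\mu_y/\pi_y}{\mu_x/\pi_x} = \log\frac{\mu_yQ_{yx}}{\mu_xQ_{xy}}$. The defining identity $\Lambda(a,b)(\log b - \log a) = b-a$ then collapses $\Lambda(\mu_xQ_{xy},\mu_yQ_{yx})\cdot\nabla\Psi_{xy} = \mu_yQ_{yx} - \mu_xQ_{xy} = \nabla\psi\text{-independent}$, reducing the expression to $\ip{\nabla(\mu/\pi\text{-free part}), \ldots}$; reassembling via the definition of $L_\mu$ and the identity $\ip{\psi,L_\mu\phi} = -\ip{\nabla\psi,(Q(\mu)\pi(\mu))\cdot\nabla\phi}$ noted just before the lemma yields exactly $-\ip{\mu_t, L_{\mu_t}\psi}$.

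\textbf{The main obstacle} will be the algebraic manipulation turning the $\Lambda$-weighted gradient of $\log(\mu/\pi)$ into the linear $L_\mu$ expression — specifically verifying that the logarithmic-mean cancellation $\Lambda(a,b)(\log a - \log b) = a - b$ combines correctly with detailed balance so that the nonlinear $\Lambda$ weights disappear and one recovers the bare generator $L_\mu$ acting on $\psi$ rather than on $\log(\mu/\pi)$. One must track carefully that the potential $\psi$ appearing in the continuity equation is distinct from the entropic gradient $\Psi=\log(\mu/\pi)$, and that the final answer is stated in terms of $\psi$; the identification works because integration by parts moves the derivative onto $\mu$ and the symmetry of $(Q(\mu)\pi(\mu))_{xy}$ lets the two gradients be exchanged. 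I would double-check the constant-speed/endpoint conditions are irrelevant here (this lemma holds for \emph{any} solution of the continuity equation, not just geodesics), so no geodesic equation is needed.
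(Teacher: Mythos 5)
Your proposal is correct and follows essentially the same route as the paper's own proof: differentiate $\cF$ along the curve to get $\ip{\log(\mu_t/\pi(\mu_t)),\dot\mu_t}$ (the additive constants dropping by mass conservation), substitute the continuity equation and integrate by parts, and then use the logarithmic-mean identity $\Lambda(a,b)(\log a-\log b)=a-b$ combined with detailed balance --- which is exactly the paper's identity $\Lambda(\mu_t)(x,y)=\frac{\nabla\rho_t(x,y)}{\nabla\log\rho_t(x,y)}\,Q(\mu_t;x,y)\,\pi(\mu_t)(x)$ with $\rho_t=\mu_t/\pi(\mu_t)$ --- to collapse the $\Lambda$-weighted sum into $-\ip{\mu_t,L_{\mu_t}\psi_t}$. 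Your closing remark that only the continuity equation (and no geodesic structure) is needed also matches the paper.
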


Note that when the curve is a solution to the gradient flow equation,
the right-hand side is indeed the discrete Fisher information, in
accordance with \eqref{e:FED}.

\begin{proof}
 Starting from the expression
 \begin{align*}
   \cF(\mu)= \sum_x\mu(x)\log\mu(x) + U(\mu)\;,
 \end{align*}
 recalling that $\partial_{\mu_x}U(\mu)=H_x(\mu)=-\log \pi_x(\mu)-\log Z(\mu)$, and
 setting $\rho_t=\mu_t/\pi(\mu_t)$, we obtain from the continuity equation
 \begin{equation*}\begin{aligned}
\frac{d}{dt} \cF(\mu_t)
  & = \sum_x \Bigl(1-\log Z(\mu_t)+\log \mu_t(x)-\log \pi_x(\mu)\Bigr)\partial_t\mu_t(x)
 = \ip{ \log \rho_t,\dot\mu_t}\\
& =\ip{\nabla\log \rho_t\ ,\ \Lambda(\mu_t)\cdot\nabla \psi_t}
 =  - \ip{\mu_t \ ,\ L_{\mu_t} \psi_t}\;.
\end{aligned}\end{equation*}
Here, we have also used in the last step that
\begin{align*}
  \Lambda(\mu_t)(x,y)= \frac{\nabla\rho_t(x,y)}{\nabla\log\rho_t(x,y)}Q(\mu_t;x,y)\pi(\mu_t)(x)\;,
\end{align*}
and integrated by parts.
\end{proof}

To give an expression of the second variation of $\cF$, we further
introduce the following notation.

Let $\partial_{\mu_z}Q(\mu;x,y)$ denote the partial derivative of $Q(\cdot;x,y)$ with respect to $\mu_z$. Then we write
\begin{align}
  DQ(\mu,\sigma;x,y):= \sum_{z\in\cX}\partial_{\mu_z}Q(\mu;x,y)\sigma_z\;.  \label{e:curv:DQ}
\end{align}
Furthermore, let us write 
\begin{align}
  M(\mu)\nabla\psi(x,y):=\sum_{z,w}M(\mu;z,w,x,y)\nabla\psi(z,w)\;, \notag\text{where} \\
  M(\mu;z,w,x,y):=\mu_x\Lambda(\mu)(z,w)\bigl[\partial_{\mu_z}Q(\mu;x,y)-\partial_{\mu_w}Q(\mu;x,y)\bigr]\;. \label{e:curv:M}
\end{align}
Then, we set
\begin{align*}
  \partial_i\Lambda(\mu)(x,y)&=\partial_i\Lambda\bigl(\mu_xQ(\mu;x,y),\mu_yQ(\mu;y,x)\bigr)\;, i=1,2\;,\\
 \hat L\Lambda(\mu)(x,y) &= \partial_1\Lambda(\mu)(x,y)\hat L_{\mu}\mu(x)Q(\mu;x,y) + \partial_2\Lambda(\mu)(x,y)\hat L_\mu\mu(y)Q(\mu;y,x)\;,\\
 R\Lambda(\mu)(x,y) &= \partial_1\Lambda(\mu)(x,y)\mu(x) DQ(\mu,\hat L_\mu\mu;x,y)+ \partial_2\Lambda(\mu)(x,y)\mu(y)DQ(\mu,\hat L_\mu\mu;y,x)\;.
\end{align*}
Finally, we can define the following quantity:
\begin{align}
  \cB(\mu,\psi) &:= 
  \frac12 \ip{\nabla\psi,\hat L\Lambda(\mu)\cdot\nabla\psi}
  -\ip{\nabla\psi,\Lambda(\mu)\cdot\nabla L_\mu\psi} \label{e:curv:B} \\
  &\qquad +\frac12 \ip{\nabla\psi,R\Lambda(\mu)\cdot\nabla\psi} 
  +\ip{\nabla\psi, M(\mu)\nabla\psi}\;, \notag
\end{align}

\begin{remark}
  Note that in the case of a linear Markov chain, the last two terms
  in the definition of $\cB$ vanish and we recover the formula of \cite{EM11} for the second derivative of the entropy along geodesics.
\end{remark}

\begin{lemma}[Second variation of the free energy]\label{lem:ent-second-var}
 Let $(\mu_t)_t$ be a $\cW$-geodesic contained in $\cP_*(\cX)$ and let $(\psi_t)$ be the unique potential such that $(\mu,\psi)\in\CE$. Then it holds
\begin{equation*}
 \pderiv[2]{}{t}\cF(\mu_t) = \Hess \cF(\mu_t)[\nabla\psi_t] = \cB(\mu_t,\psi_t)\;.
\end{equation*}
\end{lemma}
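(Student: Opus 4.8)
The plan is to start from the first-variation identity $\frac{d}{dt}\cF(\mu_t) = -\langle\mu_t, L_{\mu_t}\psi_t\rangle$ of Lemma~\ref{lem:ent-first-var}, which holds along any solution of the continuity equation and hence along the geodesic, and simply differentiate it once more in $t$. Since $(\mu_t)$ is assumed to lie in $\cP_*(\cX)$, the preceding geodesic lemma guarantees that $\mu_t$ and the associated potential $\psi_t$ are smooth in $t$, so this is legitimate. Writing $\langle\mu_t, L_{\mu_t}\psi_t\rangle = \sum_{x,y}\mu_t(x)Q(\mu_t;x,y)\psi_t(y)$ and applying the product rule produces exactly three contributions: a term $T_1 = \langle\dot\mu_t, L_{\mu_t}\psi_t\rangle$ where the derivative hits the outer measure, a term $T_2 = \sum_{x,y}\mu_t(x)\,DQ(\mu_t,\dot\mu_t;x,y)\,\psi_t(y)$ where it hits the rates through their $\mu$-dependence, and a term $T_3 = \langle\mu_t, L_{\mu_t}\dot\psi_t\rangle$ where it hits the potential. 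Thus $\frac{d^2}{dt^2}\cF(\mu_t) = -(T_1+T_2+T_3)$, and the whole proof reduces to matching $-T_1,-T_2,-T_3$ with the four terms defining $\cB$.

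For $T_1$ I would insert the continuity equation $\dot\mu_t = -\nabla\cdot(\Lambda(\mu_t)\cdot\nabla\psi_t)$ and integrate by parts via $\langle\phi,\nabla\cdot\Phi\rangle = -\langle\nabla\phi,\Phi\rangle$; since $\Lambda(\mu)$ is symmetric, this turns $-T_1$ directly into the second term $-\langle\nabla\psi_t,\Lambda(\mu_t)\cdot\nabla L_{\mu_t}\psi_t\rangle$ of $\cB$. For $T_3$ I would first use the adjoint relation $\langle\mu_t, L_{\mu_t}\dot\psi_t\rangle = \langle\hat L_{\mu_t}\mu_t,\dot\psi_t\rangle$ and then substitute the geodesic equation $\dot\psi_t(z) = -\tfrac12\partial_{\mu(z)}\langle\nabla\psi_t,\Lambda(\mu_t)\cdot\nabla\psi_t\rangle$. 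Applying the chain rule to $\partial_{\mu(z)}\Lambda(\mu)(x,y)$ — which depends on $\mu$ both explicitly through the prefactors $\mu_x,\mu_y$ and implicitly through $Q(\mu)$ — splits this contribution into two pieces. The explicit part, where $\partial_{\mu(z)}$ produces a Kronecker delta, contracts with $\hat L_{\mu}\mu$ to give exactly $\tfrac12\langle\nabla\psi,\hat L\Lambda(\mu)\cdot\nabla\psi\rangle$, while the implicit part, where $\partial_{\mu(z)}$ lands on $Q$, yields $\tfrac12\langle\nabla\psi, R\Lambda(\mu)\cdot\nabla\psi\rangle$; this is precisely where the factors $\hat L_\mu\mu$ in the definitions of $\hat L\Lambda$ and $R\Lambda$ originate.

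The remaining term $T_2$ supplies the fourth term $\langle\nabla\psi, M(\mu)\nabla\psi\rangle$. Here I would again substitute the continuity equation, now written componentwise as $\dot\mu_t(z) = -\sum_w\Lambda(\mu_t)(z,w)\nabla\psi_t(z,w)$, and then symmetrize the resulting sum in the pair $(z,w)$, using that $\Lambda(\mu)$ is symmetric while $\partial_{\mu_z}Q(\mu;x,y)-\partial_{\mu_w}Q(\mu;x,y)$ and $\nabla\psi(z,w)$ are antisymmetric; this reconstructs the kernel $M(\mu;z,w,x,y)$. The one genuinely delicate point is that the substitution naturally produces the outer factor $\psi_t(y)$ rather than the gradient $\nabla\psi_t(x,y)=\psi_t(y)-\psi_t(x)$ appearing in $\langle\nabla\psi,M(\mu)\nabla\psi\rangle$; the discrepancy is the term containing $\psi_t(x)$, and it vanishes because $Q(\mu)$ has vanishing row sums for every $\mu$, so that $\sum_y\partial_{\mu_z}Q(\mu;x,y)=\partial_{\mu_z}\sum_y Q(\mu;x,y)=0$. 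Collecting $-T_1-T_2-T_3$ then gives $\cB(\mu_t,\psi_t)$ termwise. I expect the bookkeeping of the chain rule in $T_3$ — cleanly separating the explicit $\mu$-dependence from the dependence through $Q(\mu)$, and keeping the $\partial_1\Lambda$ and $\partial_2\Lambda$ contributions straight — to be the main source of friction, with the row-sum cancellation in $T_2$ being the one nonobvious identity needed to make the fourth term come out correctly.
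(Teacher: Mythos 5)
Your proposal is correct and follows essentially the same route as the paper's proof: differentiate the first-variation identity, split via the product rule into the three contributions (derivative hitting the measure, the rates, and the potential), and convert them into the four terms of $\cB$ using the continuity equation, the geodesic equation with the chain rule for $\partial_{\mu(z)}\Lambda(\mu)(x,y)$, and the symmetrization producing $M(\mu)$, exactly as in the paper's terms $I_2$, $I_1$, $I_3$. Your explicit use of the vanishing row sums $\sum_y\partial_{\mu_z}Q(\mu;x,y)=0$ to replace $\psi_t(y)$ by $\nabla\psi_t(x,y)$ is precisely the step the paper performs implicitly in its computation \eqref{eq:Mcalc}, so there is no substantive difference.
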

\begin{proof}
From ~\eqref{e:first:variation:entropy} we get
\[\begin{split}
  \pderiv[2]{}{t}\cF(\mu(t)) &=  
 -\pderiv{}{t}\ip{\hat L_{\mu_t}\mu_t,\psi_t}\\
  &=
 -\ip{\hat L_{\mu_t}\mu_t,\dot\psi_t} - \ip{\dot\mu_t,L_{\mu_t}\psi_t} - \ip{\mu_t,\bigl(\partial_tL_{\mu_t}\bigr)\psi_t}\\
 &=: I_1 +I_2+I_3\;.
\end{split}\]
To calculate $I_1$, first note that
\begin{align*}
  \partial_{\mu(z)}\Lambda(\mu)(x,y)&=
  \partial_1\Lambda(\mu)Q(\mu;x,y)\delta_{xz}
  +\partial_2\Lambda(\mu)Q(\mu;y,x)\delta_{yz}\\
  &\quad+\partial_1\Lambda(\mu)\mu(x)\partial_{\mu_z}Q(\mu;x,y)
+\partial_2\Lambda(\mu)\mu(y)\partial_{\mu_z}Q(\mu;y,x)\;,
\end{align*}
where $\delta_{xz}$ denotes the Kronecker delta.
Hence, we infer from the geodesic equation~\eqref{e:def:MF:geodesic:vectorfield} and \eqref{e:curv:DQ} that 
\begin{align*}
  I_1 = \frac12 \ip{\nabla\psi_t,\hat L\Lambda(\mu_t)\cdot\nabla\psi_t}
      + \frac12 \ip{\nabla\psi_t,R\Lambda(\mu_t)\cdot\nabla\psi_t}\;.
\end{align*}
The continuity equation $\dot\mu_t = -\nabla\cdot\bigl(\Lambda(\mu_t)\cdot\nabla\psi_t\bigr)$ readily yields that 
\begin{align*}
  I_2 = -\ip{\nabla\psi_t,\Lambda(\mu_t)\cdot\nabla L_{\mu_t}\psi_t}\;.
\end{align*}
To calculate $I_3$, note that for any $\phi$ we have
\begin{align*}
  \partial_tL_{\mu_t}\phi =  DQ\bigl(\mu_t, \dot\mu_t\bigr)\phi = - DQ\Bigl(\mu_t, \nabla\cdot\bigl(\Lambda(\mu_t)\cdot\nabla\psi_t\bigr)\Bigr)\phi\;,
\end{align*}
while for any $\mu$ and $\psi$ we have
\begin{align}\nonumber
   \MoveEqLeft{-\ip*{\mu,DQ\Bigl(\mu, \nabla\cdot\bigl(\Lambda(\mu)\cdot\nabla\psi\bigr)\Bigr)\psi}}\\\nonumber
~=~& \sum_{x,y,z}\mu_x\partial_{\mu_z}Q(\mu;x,y)\bigl[\nabla\cdot(\Lambda(\mu)\nabla\psi)\bigr](z)\nabla\psi(x,y)\\\nonumber
~=~&\frac12\sum_{x,y,z,w}\mu_x\partial_{\mu_z}Q(\mu;x,y)\Lambda(\mu)(z,w)\bigl[\nabla\psi(w,z)-\nabla\psi(z,w)\bigr]\nabla\psi(x,y)\\\nonumber
~=~&-\frac12\sum_{x,y,z,w}\nabla\psi(x,y)\nabla\psi(z,w)\mu_x\Lambda(\mu)(z,w)\bigl[\partial_{\mu_z}Q(\mu;x,y)-\partial_{\mu_w}Q(\mu;x,y)\bigr]\\\label{eq:Mcalc}
~=~&-\ip{\nabla\psi,M(\mu)\nabla\psi}\;.
\end{align}
Thus, we get $I_3=\ip{\nabla\psi_t,M(\mu_t)\nabla\psi_t}$. As
$I_1+I_2+I_3=\cB(\mu_t,\psi_t)$, this yields the claim.
\end{proof}

We can now state the following equivalent characterizations of lower Ricci bounds: 

\begin{theorem}\label{thm:Ric-equiv}
  Let $\kappa \in \R$. For a non-linear Markov triple $(\cX,Q,\pi)$
  the following assertions are equivalent:
\begin{enumerate}
\item $\Ric(\cX,Q,\pi) \geq \kappa$\;;
\item For all $\mu \in \cP_*(\cX)$ and $\psi \in \R^\cX$ we have
\begin{align*}
 \cB(\mu, \psi) \geq \kappa \cA(\mu, \psi)\;.
\end{align*}
\item The following \emph{Evolution
  Variational Inequality} EVI$_\kappa$ holds: for all $\mu, \nu \in \cP(\cX)$ and all $t \geq 0$:
\begin{align}\tag*{EVI$_\kappa$}\label{eq:EVI}
  \frac{1}{2}\frac{\mathup{d}^+}{\mathup{d}t} \cW(\mu_t, \nu)^2 + \frac{\kappa}{2}
\cW( \mu_t, \nu)^2 \leq \cF(\nu) - \cF(\mu_t)\;,
\end{align}
where $\mu_t$ denotes the solution to the non-linear Fokker--Planck equation starting from $\mu$, i.e.~$\dot\mu_t = \hat L_{\mu_t}\mu_t = \mu_tQ(\mu_t)$ and $\mu_0=\mu$;
%
\end{enumerate}
\end{theorem}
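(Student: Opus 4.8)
The plan is to prove the equivalence (1) $\Leftrightarrow$ (2) $\Leftrightarrow$ (3) via the scheme (1) $\Leftrightarrow$ (2) and (1) $\Leftrightarrow$ (3), exploiting the Riemannian structure on $\cP_*(\cX)$ that we have already set up. The cleanest logical skeleton follows the metric-geometry paradigm from \cite{EM11, S06}: geodesic $\kappa$-convexity of $\cF$ is equivalent to a pointwise Hessian lower bound, and both are equivalent to the $\EVI_\kappa$ formulation of the gradient flow.

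First I would establish (1) $\Leftrightarrow$ (2). This is essentially the standard fact that a smooth function on a Riemannian manifold is $\kappa$-geodesically convex if and only if its Hessian is bounded below by $\kappa$ times the metric tensor. By Lemma~\ref{lem:ent-second-var} we already have the identity $\pderiv[2]{}{t}\cF(\mu_t)=\cB(\mu_t,\psi_t)$ along any $\cW$-geodesic, while the action $\cA(\mu_t,\psi_t)=\ip{\nabla\psi_t,\Lambda(\mu_t)\cdot\nabla\psi_t}$ is constant in $t$ and equals $\cW(\mu_0,\mu_1)^2$ for a constant-speed geodesic. For the direction (2) $\Rightarrow$ (1), given $\cB\geq\kappa\cA$ one integrates the differential inequality $g''(t)\geq\kappa\,\cW(\mu_0,\mu_1)^2$ for $g(t)=\cF(\mu_t)$ twice, which yields exactly the convexity estimate in the definition. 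For (1) $\Rightarrow$ (2), one notes that the convexity inequality for \emph{all} geodesics forces $g''(0)\geq\kappa\,\cA$ at every starting point; since through any $\mu\in\cP_*(\cX)$ and any cotangent direction $\psi\in\R^\cX$ there passes a geodesic with initial velocity $\cK[\mu]\psi$, the pointwise bound $\cB(\mu,\psi)\geq\kappa\cA(\mu,\psi)$ follows for all admissible $(\mu,\psi)$. The density of the geodesic interior $\cP_*(\cX)$ and continuity let us pass to the boundary.

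Next I would prove (1) $\Leftrightarrow$ (3), connecting geodesic convexity to the $\EVI_\kappa$ for the gradient flow. This is the more analytically delicate equivalence. By Proposition~\ref{prop:GF} the solution $\mu_t$ to $\dot\mu_t=\hat L_{\mu_t}\mu_t$ is the $\cW$-gradient flow of $\cF$, so the abstract theory of gradient flows in metric spaces applies: $\kappa$-geodesic convexity of $\cF$ yields $\EVI_\kappa$, and conversely the existence of an $\EVI_\kappa$-flow implies $\kappa$-convexity of $\cF$ along geodesics. For the direction (1) $\Rightarrow$ (3), the standard argument differentiates $t\mapsto\tfrac12\cW(\mu_t,\nu)^2$ along the flow, using the first variation of the squared distance together with the convexity inequality evaluated on the geodesic from $\mu_t$ to $\nu$; here the crucial input is that $\cF$ is $\kappa$-convex and the metric-gradient-flow identity $\tfrac{d^+}{dt}\tfrac12\cW(\mu_t,\nu)^2=\ip{\text{(initial geodesic velocity)},\dot\mu_t}$ holds. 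For the converse (3) $\Rightarrow$ (1), one uses the well-known fact (see \cite{S06}) that along any geodesic $(\mu_s)_{s\in[0,1]}$ the function $s\mapsto\cF(\mu_s)$ inherits $\kappa$-convexity from the $\EVI_\kappa$ contraction estimate, by comparing the flow with the two endpoints.

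\textbf{The hard part} will be making the (1) $\Leftrightarrow$ (3) equivalence rigorous despite the degeneracy of the metric $\cK[\mu]^{-1}$ at $\partial\cP(\cX)$ and the nonlinearity of $Q$. The abstract $\EVI$-theory is usually phrased for gradient flows in geodesic metric spaces, and one must check that our $\cW$ together with the nonlinear flow genuinely fits that framework — in particular that solutions stay in $\cP_*(\cX)$ for positive times (so that the Riemannian computations of Lemmas~\ref{lem:ent-first-var} and~\ref{lem:ent-second-var} are licit) and that the right derivative $\tfrac{d^+}{dt}\cW(\mu_t,\nu)^2$ exists and admits the needed first-variation formula. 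The nonlinearity enters through the extra terms $R\Lambda$ and $M(\mu)$ in $\cB$, but these are already fully accounted for in Lemma~\ref{lem:ent-second-var}, so the curvature computation itself poses no new obstacle; the genuine work is the metric-analytic justification that convexity, the Hessian bound, and $\EVI_\kappa$ are interchangeable in this possibly-singular, state-dependent geometry. I would handle this by first proving all three statements on the open simplex $\cP_*(\cX)$, where the manifold is smooth and classical Riemannian arguments apply verbatim, and only afterward extend to the closed simplex by approximation and lower semicontinuity of $\cF$ and $\cW$.
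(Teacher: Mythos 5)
Your scheme breaks down at the two implications you delegate to ``standard'' theory, and these failure points are precisely what the paper's proof is designed to circumvent. First, for (2) $\Rightarrow$ (1) you propose to integrate $\tfrac{d^2}{dt^2}\cF(\mu_t)=\cB(\mu_t,\psi_t)\geq\kappa\,\cA(\mu_t,\psi_t)$ along an arbitrary $\cW$-geodesic; but Lemma~\ref{lem:ent-second-var} is only valid for geodesics \emph{contained in} $\cP_*(\cX)$, and in this degenerate geometry a geodesic --- even one joining two interior measures --- may touch the boundary of the simplex, where the Riemannian structure and the second-variation formula are unavailable. This is exactly why the paper remarks, right after the statement, that the equivalence of (1) and (2) ``is a non-trivial assertion, since the Riemannian metric degenerates at the boundary of $\cP(\cX)$''. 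Your proposed remedy --- prove everything on the open simplex and then ``extend by approximation and lower semicontinuity'' --- does not repair this: one cannot approximate a boundary-touching geodesic by interior geodesics while retaining control of the second derivative of $\cF$ along it.

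Second, and more seriously, your route (1) $\Rightarrow$ (3) rests on the claim that ``the abstract theory of gradient flows in metric spaces'' converts geodesic $\kappa$-convexity into EVI$_\kappa$. That implication is false in general metric spaces: EVI$_\kappa$ is strictly \emph{stronger} than geodesic $\kappa$-convexity; the true general implication is the converse (3) $\Rightarrow$ (1), due to Daneri and Savar\'e \cite{DS08}, which you also invoke and which is fine. To pass from convexity to EVI one needs extra structure (convexity along generalized geodesics, or genuine Riemannian smoothness and completeness), none of which is available here off the open simplex. The paper supplies the missing mechanism: following \cite[Thm.~4.5]{EM11}, it proves (2) $\Rightarrow$ (3) \emph{directly, with no geodesics at all}, via the Daneri--Savar\'e-type identity of Lemma~\ref{lem:dan-sav},
\begin{align*}
  \frac12\,\partial_t\cA(\mu^s_t,\psi^s_t)+\partial_s\cF(\mu^s_t)=-s\,\cB(\mu^s_t,\psi^s_t)\;,
\end{align*}
valid along arbitrary smooth curves $(\mu^s)$ whose points are evolved by the nonlinear flow with an $s$-dependent time rescaling (so that $\partial_t\mu^s_t=s\,\hat L_{\mu^s_t}\mu^s_t$); integrating this in $s$ under the Bochner bound (2) yields EVI$_\kappa$, and the flow's instantaneous regularization into $\cP_*(\cX)$ renders the boundary harmless. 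The closed loop is then (1) $\Rightarrow$ (2) (your local argument at interior points, which is correct) $\Rightarrow$ (3) $\Rightarrow$ (1). Without an analogue of Lemma~\ref{lem:dan-sav} --- which for the nonlinear dynamics requires handling the extra terms $R\Lambda(\mu)$ and $M(\mu)$ in $\cB$ --- your proposal has no valid path from (1) or (2) into (3), and hence no valid proof of the theorem.
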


By Lemma \ref{lem:ent-second-var}, (2) corresponds to a lower bound
$\kappa$ on the Hessian of $\cF$ in the Riemannian structure on
$\cP_*(\cX)$ induced by $\cW$. Note that the equivalence of (1) and
(2) is a non-trivial assertion, since the Riemannian metric
degenerates at the boundary of $\cP(\cX)$.

\begin{proof}
  The proof is based on an argument of Daneri and Savar\'e \cite{DS08}
  suitably adapted to the discrete setting. We can follow verbatim the
  proof of \cite[Thm.~4.5]{EM11} where the analogue of
  Thm.~\ref{thm:Ric-equiv} is proven for linear Markov chains. The
  core of the argument is a variation of the action along the
  evolution equation, \cite[Lem.~4.6]{EM11}. To accommodate the
  additional terms arising from the non-linear structure in the
  present situation, we have to replace that lemma with Lemma
  \ref{lem:dan-sav} below.
\end{proof}

\begin{lemma}\label{lem:dan-sav}
  Let $\{\mu^s\}_{s \in [0,1]}$ be a smooth curve in $\cP_*(\cX)$. For
  each $t \geq 0,$ let $\mu_t^s$ denote the solution of the non-linear Fokker--Planck equation at time $s+t$ starting from $\mu^s$ and let
  $\{\psi_t^s\}_{s \in [0,1]}$ be a smooth curve in $\R^\cX$ satisfying the
  continuity equation
\begin{align*}
  \partial_s \mu_t^s + \nabla \cdot ( \Lambda(\mu_t^s) \cdot \nabla \psi_t^s) =
  0\;, \qquad s \in [0,1]\;.
\end{align*}
Then the identity
\begin{align*}
 \frac12 \partial_t \cA(\mu_t^s, \psi_t^s)
 	 + \partial_s \cF(\mu_t^s) = - s \cB(\mu_t^s, \psi_t^s)
\end{align*}
holds for every $s \in [0,1]$ and $t \geq 0$.
\end{lemma}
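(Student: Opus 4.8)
The plan is to prove the identity $\frac12\partial_t\cA(\mu_t^s,\psi_t^s)+\partial_s\cF(\mu_t^s)=-s\cB(\mu_t^s,\psi_t^s)$ by computing the two terms on the left-hand side separately and then recognizing their combination as $-s\cB$. The setup is a two-parameter family where $s$ indexes an initial condition along a smooth curve and $t$ indexes the flow of the nonlinear Fokker--Planck evolution. The key structural fact I would exploit is that fixing $s$ and varying $t$ gives a solution to \eqref{e:master2}, while fixing $t$ and varying $s$ gives a curve satisfying the continuity equation with potential $\psi_t^s$; so both parameter directions carry useful equations.

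First I would compute $\partial_s\cF(\mu_t^s)$ using Lemma~\ref{lem:ent-first-var}: since $(\mu_t^\cdot,\psi_t^\cdot)$ solves the continuity equation in the $s$-variable, the first variation formula gives $\partial_s\cF(\mu_t^s)=-\ip{\mu_t^s,L_{\mu_t^s}\psi_t^s}$. Next I would compute $\frac12\partial_t\cA(\mu_t^s,\psi_t^s)$. Writing $\cA(\mu,\psi)=\ip{\nabla\psi,\Lambda(\mu)\cdot\nabla\psi}$, the $t$-derivative produces three contributions: one from $\partial_t\psi_t^s$ acting through $\nabla\psi$, one from $\partial_t\mu_t^s$ entering $\Lambda$ explicitly, and one from $\partial_t\mu_t^s$ entering the rate matrices $Q(\mu_t^s)$ inside $\Lambda$. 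The crucial substitution is that $\mu_t^s$ solves the master equation in $t$, so $\partial_t\mu_t^s=\hat L_{\mu_t^s}\mu_t^s$, which is exactly the driving velocity that appears in the definitions of $\hat L\Lambda$ and $R\Lambda$ in \eqref{e:curv:M} and the surrounding display. I would also need an expression for $\partial_t\psi_t^s$; this is obtained by differentiating the continuity equation $\partial_s\mu_t^s+\nabla\cdot(\Lambda(\mu_t^s)\cdot\nabla\psi_t^s)=0$ in $t$ and commuting the $s$ and $t$ derivatives on $\mu$, which yields an elliptic-type identity that determines $\partial_t\psi_t^s$ (up to constants) and lets me eliminate it after integration by parts.

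After assembling these pieces I expect the $\partial_t\psi_t^s$ contribution, together with $\partial_s\cF$, to cancel or combine into the term $-\ip{\nabla\psi,\Lambda(\mu)\cdot\nabla L_\mu\psi}$ via integration by parts (mirroring the $I_2$ computation in Lemma~\ref{lem:ent-second-var}), while the explicit $\partial_t\Lambda$ contributions reproduce the $\hat L\Lambda$, $R\Lambda$, and $M(\mu)$ terms. I would organize the $\Lambda$-derivative using the chain rule $\partial_t\Lambda(\mu_t^s)(x,y)=\partial_1\Lambda\,\partial_t[\mu_x Q(\mu;x,y)]+\partial_2\Lambda\,\partial_t[\mu_y Q(\mu;y,x)]$, splitting each bracket into the direct $\partial_t\mu$ part (giving $\hat L\Lambda$) and the $DQ(\mu,\partial_t\mu)$ part (giving $R\Lambda$), with the $M(\mu)$ term emerging exactly as in the computation of $I_3$ in Lemma~\ref{lem:ent-second-var} since the nonlinear rate-derivative structure is identical. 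The factor of $s$ on the right-hand side is the signature of the Daneri--Savar\'e argument: it enters because $\mu_t^s$ is evaluated at flow-time $t$ but the action/entropy are differentiated along the perturbation family, so the curvature term accrues proportionally to how far along the initial curve one has moved.

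The main obstacle I anticipate is the bookkeeping of $\partial_t\psi_t^s$ and verifying that its contribution, after integration by parts in space, matches precisely the $-\ip{\nabla\psi,\Lambda(\mu)\cdot\nabla L_\mu\psi}$ term in $\cB$ rather than leaving a residual. In the linear case this is Lemma~4.6 of \cite{EM11}; here the nonlinearity couples $\psi$-dependence and $\mu$-dependence inside $Q$, so commuting derivatives and tracking which terms are symmetric in $(x,y)$ requires care. A secondary subtlety is justifying the smoothness and the commutation $\partial_s\partial_t\mu_t^s=\partial_t\partial_s\mu_t^s$, which follows from standard well-posedness and smooth dependence on parameters for the ODE \eqref{e:master2} together with the assumed smoothness of $s\mapsto\mu^s$ and the Lipschitz/differentiability hypotheses on $Q$ in Assumption~\ref{ass:GibbsPotential}. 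Once the three blocks $\hat L\Lambda$, $R\Lambda$, $M$ and the $\Lambda\cdot\nabla L_\mu$ term are correctly identified, matching the definition \eqref{e:curv:B} of $\cB$ is immediate.
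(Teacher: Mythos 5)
Your overall skeleton---computing $\partial_s\cF$ via Lemma~\ref{lem:ent-first-var}, differentiating $\cA$ in $t$, eliminating $\partial_t\psi_t^s$ by commuting the mixed $(s,t)$-derivatives of $\ip{\mu_t^s,\phi}$, and matching the blocks $\hat L\Lambda$, $R\Lambda$, $M$ and $\Lambda\cdot\nabla L_\mu\psi$ against the definition \eqref{e:curv:B}---is exactly the strategy of the paper's proof. But there is a genuine gap at the single most delicate point: the origin of the factor $s$. You substitute $\partial_t\mu_t^s=\hat L_{\mu_t^s}\mu_t^s$, taking the statement's phrase ``at time $s+t$'' literally. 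In the Daneri--Savar\'e scheme \cite{DS08}, in \cite[Lem.~4.6]{EM11}, and in the proof actually used here, the flow time is $st$, not $s+t$ (the ``$s+t$'' in the statement is a typo), so the correct substitutions are $\partial_t\mu_t^s=s\,\hat L_{\mu_t^s}\mu_t^s$ and consequently $\partial_t\Lambda(\mu_t^s)=s\bigl(\hat L\Lambda(\mu_t^s)+R\Lambda(\mu_t^s)\bigr)$. The factor $s$ is not a soft ``signature'' that can be appended at the end, as your last paragraph suggests: it enters through these substitutions, and, crucially, differentiating $\partial_t\ip{\mu_t^s,\phi}=s\ip{\mu_t^s,L_{\mu_t^s}\phi}$ in $s$ produces, by the product rule, the extra zeroth-order term $\ip{\mu_t^s,L_{\mu_t^s}\psi_t^s}=-\partial_s\cF(\mu_t^s)$; it is precisely this term that cancels the $\partial_s\cF$ on the left-hand side and leaves $-s\cB$.

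With your substitution the computation does close, but to a different identity. Writing $\bar I_1$ and $\bar I_2$ for the two contributions to $\tfrac12\partial_t\cA$ coming from $\partial_t\nabla\psi_t^s$ and from $\partial_t\Lambda(\mu_t^s)$ respectively, your own steps (mixed-derivative commutation with $\partial_t\mu_t^s=\hat L_{\mu_t^s}\mu_t^s$, then \eqref{eq:Mcalc} and integration by parts) give $\bar I_1+2\bar I_2=\ip{\nabla\psi_t^s,\Lambda(\mu_t^s)\cdot\nabla L_{\mu_t^s}\psi_t^s}-\ip{\nabla\psi_t^s,M(\mu_t^s)\nabla\psi_t^s}$, whence $\tfrac12\partial_t\cA(\mu_t^s,\psi_t^s)=-\cB(\mu_t^s,\psi_t^s)$ with no $\partial_s\cF$ and no factor $s$ anywhere---indeed none of the quantities in your computation contains one. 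So, as written, your argument proves $\tfrac12\partial_t\cA+\partial_s\cF=-\cB+\partial_s\cF$, which is not the claimed identity $-s\cB$ (and cannot be repaired a posteriori, since $\partial_s\cF$ does not vanish in general). The fix is local: define $\mu_t^s$ as the solution at time $st$ started from $\mu^s$, redo the three substitutions above carrying the factor $s$, and note the product-rule cancellation of $\partial_s\cF$; after that, your bookkeeping of the four blocks of \eqref{e:curv:B} goes through as you describe.
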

\begin{proof}
First of all, setting $\rho^s_t=\frac{\mu^s_t}{\pi(\mu^s_t)}$ we compute as in Lemma \ref{lem:ent-first-var} that
\begin{align*}
\partial_s \cF(\mu_t^s)
  & = \ip{\log \rho^s_t\ ,\ \partial_s\mu^s_t}
 =  - \ip{\hat L_{\mu^s_t}\mu_t^s\ ,\ \psi^s_t} \;.
\end{align*}
Furthermore, 
\begin{align*}
 \frac12 \partial_t \cA(\mu_t^s, \psi_t^s)
&  =  \ip{\partial_t\nabla \psi_t^s\ ,\, \Lambda(\mu^s_t)\nabla \psi_t^s}
 + \frac12\ip{ \nabla \psi_t^s\ ,\, \partial_t\Lambda(\mu^s_t)
\cdot\nabla \psi_t^s }
 \\& =: \bar{I}_1 + \bar{I}_2\;.			
\end{align*}
In order to further manipulate $\bar{I}_1$ we first note that 
\begin{align*}
  \partial_t\mu^s_t=s\cdot\hat L_{\mu^s_t}\mu^s_t\;.
\end{align*}
Further, we observe that for any $\phi\in \R^\cX$ 
\begin{align}\nonumber
\MoveEqLeft{\ip{\nabla\phi,\Lambda(\mu^s_t)\cdot \partial_t\nabla\psi^s_t}
+
\ip{\nabla\phi, \partial_t\Lambda(\mu^s_t)\cdot \nabla\psi^s_t}} \\ \label{eq:partial-st}
~=~
&\ip{\mu^s_t,L_{\mu^s_t}\phi} + s\ip{\partial_s\mu^s_t,L_{\mu^s_t}\phi} + s\ip{\mu^s_t,\partial_sL_{\mu^s_t}\phi}\;.
\end{align}
To show \eqref{eq:partial-st}, note that the left-hand side equals
$\partial_t \partial_s \ip{\mu_t^s,\phi}$, while the right-hand side equals
$\partial_s \partial_t \ip{\mu_t^s,\phi}$. Integrating by parts repeatedly and using \eqref{eq:partial-st} we obtain
\begin{align*}
  \bar{I}_1 
  &  =  - \ip{\nabla \psi_t^s\ ,\, \partial_t\Lambda(\mu^s_t)\cdot\nabla\psi^s_t} + \ip{\mu^s_t,L_{\mu^s_t}\psi^s_t}
	    + s \ip{ \partial_s\mu_t^s,L_{\mu^s_t}\psi^s_t}
            +s \ip{\mu^s_t,(\partial_sL_{\mu^s_t})\psi^s_t}
\\  & = -2\bar{I}_2 - \partial_s \cF(\mu_t^s)
		 +  s  \ip{\nabla \psi_t^s\ ,\,
   			\Lambda(\mu^s_t)\cdot\nabla L_{\mu^s_t} \psi_t^s}  
	  + s\ip{\mu^s_t,(\partial_sL_{\mu^s_t})\psi^s_t}
\end{align*}
Thus, we arrive at
\begin{align*}
  \MoveEqLeft{\frac12 \partial_t \cA(\mu_t^s, \psi_t^s) + \partial_s \cF(\mu_t^s)}\\
= &-\frac12\ip{ \nabla \psi_t^s\ ,\, \partial_t\Lambda(\mu^s_t)
\cdot\nabla \psi_t^s }
 +  s  \ip{\nabla \psi_t^s\ ,\,
   			\Lambda(\mu^s_t)\cdot\nabla L_{\mu^s_t} \psi_t^s}  
	  + s\ip{\mu^s_t,(\partial_sL_{\mu^s_t})\psi^s_t}\;.
\end{align*}
To conclude, it suffices to note that 
\begin{align*}
  \partial_t\Lambda(\mu^s_t)&=s\cdot\hat L\Lambda(\mu^s_t) +s\cdot R\Lambda(\mu^s_t)\;,
\end{align*}
further remark that for any $\phi$ we have
\begin{align*}
  \partial_sL_{\mu^s_t}\phi = DQ(\mu^s_t,\partial_s\mu^s_t)\phi= - DQ\Bigl(\mu^s_t, \nabla\cdot\bigl(\Lambda(\mu^s_t)\cdot\nabla\psi^s_t\bigr)\Bigr)\phi\;,
\end{align*}
and then use again \eqref{eq:Mcalc}.
\end{proof}

To end this section, we use Theorem~\ref{thm:Ric-equiv} to give an
expression of the optimal lower Ricci bound on the two point space. 
\begin{lemma}[Two-point space]\label{lem:kappa:two-point}
  Let $\bigl(\set{0,1}, Q,\pi\bigr)$ be a non-linear Markov triple
  on the base space $\cX=\set{0,1}$ and let $p(\mu) := Q(\mu;0,1)$ and
  $q(\mu):= Q(\mu;1,0)$ as well as $p'(\mu)=[\partial_{\mu_0}-\partial_{\mu_1}] p(\mu)$ and $q'(\mu)=[\partial_{\mu_1}-\partial_{\mu_0}] q(\mu)$. Then, the optimal constant $\kappa$ such that
  $\Ric(\set{0,1},Q,\pi)\geq \kappa$ is given by
\begin{equation}\label{e:kappa:two-point}
\begin{split}
  \kappa_{\text{opt}} = \inf_{\mu \in \cP(\cX)} \Biggl( \frac{p(\mu)+q(\mu)}{2} &+ \frac{\mu(0) p'(\mu) + \mu(1) q'(\mu)}{2} \\
   &+ \frac{\Lambda(\mu)(0,1)}{2} \bra*{\frac{1}{\mu(0)\,\mu(1)} + \frac{p'(\mu)}{p(\mu)} + \frac{q'(\mu)}{q(\mu)}} \Biggr)
\end{split}
\end{equation}
\end{lemma}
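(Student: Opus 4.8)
The plan is to invoke the equivalence (1)$\Leftrightarrow$(2) of Theorem~\ref{thm:Ric-equiv}: for the two--point triple $\bigl(\set{0,1},Q,\pi\bigr)$ one has $\Ric\geq\kappa$ if and only if $\cB(\mu,\psi)\geq\kappa\,\cA(\mu,\psi)$ for every $\mu\in\cP_*(\cX)$ and $\psi\in\R^\cX$. Consequently the optimal constant equals the infimum of the Rayleigh quotient $\cB(\mu,\psi)/\cA(\mu,\psi)$ over all interior $\mu$ and all $\psi$ with $\cA(\mu,\psi)>0$. The crucial structural simplification is that on $\cX=\set{0,1}$ every ingredient of $\cA$ and $\cB$ depends on $\psi$ only through the single scalar $w:=\psi_1-\psi_0$: indeed $\nabla\psi$ depends only on differences, and since $Q(\mu)$ is a rate matrix one gets $L_\mu\psi(0)=p\,w$ and $L_\mu\psi(1)=-q\,w$ with $p=p(\mu)$, $q=q(\mu)$. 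Both $\cA$ and $\cB$ are homogeneous of degree two in $w$, so the quotient does not depend on $\psi$ at all, and it remains to evaluate it for $w=1$ and minimise over $\mu$.

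Next I would compute the relevant scalars explicitly. Writing $a=\mu(0)p$, $b=\mu(1)q$ and $\Lambda:=\Lambda(\mu)(0,1)=\Lambda(a,b)$, one reads off $\cA(\mu,\psi)=w^2\Lambda$ and $\nabla L_\mu\psi(0,1)=-(p+q)w$. The divergence is $\hat L_\mu\mu(0)=-\hat L_\mu\mu(1)=\mu(1)q-\mu(0)p=:-J$, with flux $J=a-b$, and the nonlinear correction evaluates to $DQ(\mu,\hat L_\mu\mu;0,1)=-J\,p'$ and $DQ(\mu,\hat L_\mu\mu;1,0)=J\,q'$; this is precisely where the derivatives $p'=p'(\mu)$ and $q'=q'(\mu)$ enter. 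Substituting these into the definition~\eqref{e:curv:B} of $\cB$ produces four scalar contributions (all proportional to $w^2$): the middle term $-\ip{\nabla\psi,\Lambda(\mu)\cdot\nabla L_\mu\psi}$ gives $(p+q)\Lambda\,w^2$; the $M(\mu)$--term gives $\bigl(\mu(0)p'+\mu(1)q'\bigr)\Lambda\,w^2$; and the $\hat L\Lambda$-- and $R\Lambda$--terms combine to $\tfrac{J}{2}\bigl[(q+\mu(1)q')\,\partial_2\Lambda-(p+\mu(0)p')\,\partial_1\Lambda\bigr]w^2$, where $\partial_1\Lambda,\partial_2\Lambda$ denote the partials of the logarithmic mean at $(a,b)$.

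The heart of the argument, and the step I expect to be the main obstacle, is the simplification of this last flux term and the identification of the ensuing cancellations. Here I would insert the explicit partials $\partial_1\Lambda=\tfrac1L\bigl(1-\tfrac{\Lambda}{a}\bigr)$ and $\partial_2\Lambda=\tfrac1L\bigl(\tfrac{\Lambda}{b}-1\bigr)$ with $L=\log a-\log b$, together with the identity $J=\Lambda\,L$, which collapses the prefactor $J/(2\Lambda)$ arising after division by $\cA$ into $L/2$. Using $a=\mu(0)p$, $b=\mu(1)q$ to rewrite $(q+\mu(1)q')\tfrac{\Lambda}{b}=\tfrac{\Lambda}{\mu(1)}+\tfrac{\Lambda q'}{q}$ and likewise for the $a$--term, and finally $\tfrac1{\mu(0)}+\tfrac1{\mu(1)}=\tfrac1{\mu(0)\mu(1)}$ since $\mu(0)+\mu(1)=1$, the flux contribution to the quotient $\cB/\cA$ becomes
\begin{align*}
 \frac{\Lambda}{2}\bra*{\frac{1}{\mu(0)\mu(1)}+\frac{p'}{p}+\frac{q'}{q}}-\frac{p+q}{2}-\frac{\mu(0)p'+\mu(1)q'}{2}\;.
\end{align*}
The two subtracted halves cancel exactly one half of the contributions $(p+q)$ and $\mu(0)p'+\mu(1)q'$ coming from the $L_\mu$-- and $M(\mu)$--terms, so that adding all pieces and dividing by $\cA(\mu,\psi)=w^2\Lambda$ leaves precisely the expression~\eqref{e:kappa:two-point}. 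Taking the infimum over $\mu$ finishes the proof; one finally remarks that, since $\Lambda(\mu)(0,1)/\bigl(\mu(0)\mu(1)\bigr)\to+\infty$ as $\mu$ approaches the boundary of the simplex, the infimum over $\cP(\cX)$ coincides with the one over the interior $\cP_*(\cX)$ on which the characterisation of Theorem~\ref{thm:Ric-equiv} applies.
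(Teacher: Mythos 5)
Your proposal is correct and follows essentially the same route as the paper's proof: invoke the equivalence (1)$\Leftrightarrow$(2) of Theorem~\ref{thm:Ric-equiv}, evaluate the four terms of $\cB$ and the action $\cA$ on the two-point space using $\hat L_\mu\mu(0)=-\hat L_\mu\mu(1)=\mu(1)q-\mu(0)p$, and eliminate the partials of the logarithmic mean via $\partial_1\Lambda(a,b)(a-b)=\Lambda-\Lambda^2/a$ and $\partial_2\Lambda(a,b)(a-b)=-\Lambda+\Lambda^2/b$ (your explicit forms $\partial_1\Lambda=\tfrac1L(1-\Lambda/a)$, $\partial_2\Lambda=\tfrac1L(\Lambda/b-1)$ with $J=\Lambda L$ are the same identities). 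Your closing remark on the blow-up of $\Lambda(\mu)(0,1)/(\mu(0)\mu(1))$ at the boundary, justifying that the infimum over $\cP(\cX)$ agrees with that over $\cP_*(\cX)$, is a welcome detail the paper leaves implicit.
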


\begin{remark}
  Note that in the case of a linear Markov chain, where $p$ and $q$
  are independent of $\mu$, and in particular $p'\equiv 0 \equiv q'$,
  we recover the formula in~\cite[Remark 2.11]{Maas2011}.
\end{remark}

\begin{proof}
First, we compute from~\eqref{e:curv:B} for any $\mu\in\cP_*(\set{0,1})$ and non-constant $\psi$:
\[
 \begin{split}
 \frac{\cB(\mu,\psi)}{(\psi(0)-\psi(1))^2} = &\frac{1}{2}\Bigl(\partial_1\Lambda(\mu)(0,1)p(\mu)\hat L_\mu\mu(0)
  +\partial_2\Lambda(\mu)(0,1)q(\mu)\hat L_\mu\mu(1) \Bigr)\\
 &+\Lambda(\mu)(0,1) \bigl(p(\mu) +q(\mu)\bigr)  \\
 &+\frac{1}{2}\partial_1\Lambda(\mu)(0,1)\mu(0)\bra*{ \partial_{\mu_0}p(\mu) \hat L_\mu\mu(0) +\partial_{\mu_1}p(\mu)\hat L_\mu\mu(1)}  \\
 &+\frac{1}{2}\partial_2\Lambda(\mu)(0,1)\mu(1)\bra*{ \partial_{\mu_0}q(\mu) \hat L_\mu\mu(0) +\partial_{\mu_1}q(\mu) \hat L_\mu\mu(1)} \\
 &+\Lambda(\mu)(0,1) \bigl[\mu(0)\bigl(\partial_{\mu_0}p(\mu)-\partial_{\mu_1}p(\mu)\bigr)-\mu(1)\bigl(\partial_{\mu_0}q(\mu)-\partial_{\mu_1}q(\mu)\bigr)\bigr]\;.
\end{split}
\]
Now, note that $\hat L_\mu\mu(0)=-\hat L_\mu\mu(1)= \mu(1) q(\mu)-\mu(0) p(\mu)$, yielding
\[
\begin{split}
\MoveEqLeft{\frac{\cB(\mu,\psi)}{(\psi(0)-\psi(1))^2}} \\ 
&= \Lambda(\mu)(0,1) \bigl[(p(\mu) +q(\mu) + \mu(0)p'(\mu)+\mu(1) q'(\mu)\bigr] \\
 &\phantom{=}+ \frac{1}{2}\bigl[\partial_1\Lambda(\mu)(0,1)p(\mu)  - \partial_2\Lambda(\mu)(0,1)q(\mu)\bigr]\bra*{\mu(1)q(\mu)-\mu(0)p(\mu)} \\
 &\phantom{=}+ \frac12\bigl[\partial_1\Lambda(\mu)(0,1)\mu(0)p'(\mu) - \partial_2\Lambda(\mu)(0,1)\mu(1) q'(\mu)\bigr]\bra*{\mu(1)q(\mu)-\mu(0)p(\mu)} \\
\end{split}
\]
Furthermore, $\cA(\mu,\psi) = \Lambda(\mu)(0,1) (\psi(1)-\psi(0))^2$. Thus by Theorem \ref{thm:Ric-equiv} we get the optimal curvature bound $\kappa_{\text{opt}}$ by dividing the above identity by $\Lambda(\mu)(0,1)$ and minimize in $\mu$. Now, we use the identities
\[
  \Lambda_1(a,b)(a-b) = \Lambda(a,b)- \frac{\Lambda(a,b)^2}{a} \quad\text{and}\quad   \Lambda_2(a,b)(a-b) = -\Lambda(a,b)+ \frac{\Lambda(a,b)^2}{b}
\]
to get rid of the partial derivatives and obtain after some further simplifications the result~\eqref{e:kappa:two-point}.
\end{proof}

\section{Consequences of Ricci bounds}
\label{sec:consequ}

In this section we derive consequences of Ricci curvature lower bounds
for non-linear Markov chains in terms of functional inequalities and
the trend to equilibrium for the dynamics. Throughout this section,
let $(\cX,Q,\pi)$ be a non-linear Markov triple satisfying Assumption
\ref{ass:GibbsPotential}.

We first note the following expansion bound for the transport
distance between solutions to the non-linear Markov dynamics.
\begin{proposition}
 Assume that $\Ric(\cX,Q)\geq \kappa$ for some $\kappa \in\R$. Then for any two solutions $(\mu_t^i)_{t\geq0}$ to the non-linear evolution equation $\dot\mu^i_t=\mu^i_tQ(\mu^i_t)$, $i=1,2$ we have
 \begin{align*}
   \cW(\mu^1_t,\mu^2_t)\leq e^{-\kappa t}\cW(\mu^1_0,\mu^2_0)\;.
 \end{align*}
\end{proposition}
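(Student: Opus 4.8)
The plan is to derive this contraction estimate as a direct consequence of the Evolution Variational Inequality EVI$_\kappa$ established in Theorem~\ref{thm:Ric-equiv}. The EVI is precisely the differential inequality that, under positive curvature, yields contraction of the dynamics; the standard route is to apply it along both trajectories and combine.

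First I would fix the two solutions $(\mu^1_t)_{t\ge0}$ and $(\mu^2_t)_{t\ge0}$ and consider the function
\begin{align*}
  t\mapsto \cW(\mu^1_t,\mu^2_t)^2\;.
\end{align*}
The aim is to show its upper right derivative satisfies $\frac{\dx{}^+}{\dx{t}}\cW(\mu^1_t,\mu^2_t)^2\le -2\kappa\,\cW(\mu^1_t,\mu^2_t)^2$, from which Gronwall's lemma gives the claim. The key idea is to apply EVI$_\kappa$ twice: once with the evolving curve being $\mu^1$ and the fixed reference point being (momentarily) $\mu^2_t$, and once with the roles reversed. Concretely, for a fixed time $t_0$ one freezes $\nu=\mu^2_{t_0}$ and applies \ref{eq:EVI} to the trajectory $\mu^1$ to bound $\frac12\frac{\dx{}^+}{\dx{t}}\cW(\mu^1_t,\mu^2_{t_0})^2\big|_{t=t_0}$ in terms of $\cF(\mu^2_{t_0})-\cF(\mu^1_{t_0})$ and $\frac{\kappa}{2}\cW(\mu^1_{t_0},\mu^2_{t_0})^2$, and symmetrically freezes $\nu=\mu^1_{t_0}$ and applies it to $\mu^2$.

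The main technical obstacle is that EVI$_\kappa$ as stated only controls the derivative with respect to moving one endpoint while the other is held fixed, whereas here both endpoints evolve simultaneously. The standard remedy is a chain-rule/locality argument: the right derivative of $t\mapsto\cW(\mu^1_t,\mu^2_t)^2$ along the diagonal can be split into the partial right derivatives in each argument, each of which is governed by an application of EVI$_\kappa$ with the other argument frozen at its current value. Adding the two EVI estimates, the free-energy differences $\cF(\mu^2_{t_0})-\cF(\mu^1_{t_0})$ and $\cF(\mu^1_{t_0})-\cF(\mu^2_{t_0})$ cancel exactly, leaving
\begin{align*}
  \frac12\frac{\dx{}^+}{\dx{t}}\cW(\mu^1_t,\mu^2_t)^2 + \kappa\,\cW(\mu^1_t,\mu^2_t)^2\le 0\;.
\end{align*}
Making the splitting of the diagonal derivative rigorous requires the joint regularity of the solution map and a careful handling of one-sided derivatives; one would invoke the smoothness of trajectories in $\cP_*(\cX)$ together with the metric properties of $\cW$, closely following the now-classical argument for metric-space gradient flows satisfying an EVI (as in \cite{DS08}). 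Once the differential inequality is in place, an application of Gronwall's lemma integrates it to $\cW(\mu^1_t,\mu^2_t)\le e^{-\kappa t}\cW(\mu^1_0,\mu^2_0)$, completing the proof.
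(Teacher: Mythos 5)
Your proposal is correct and follows essentially the same route as the paper: the paper's proof is a one-line citation of \cite[Prop.~3.1]{DS08} applied to $\cF$ on $(\cP(\cX),\cW)$, i.e.\ the fact that EVI$_\kappa$ implies $\kappa$-contraction, and your argument simply unfolds the standard mechanism inside that citation (double application of the EVI with frozen endpoints, cancellation of the free-energy terms, Gronwall). You also correctly flag the one genuine technical point --- splitting the derivative of $t\mapsto\cW(\mu^1_t,\mu^2_t)^2$ along the diagonal --- and defer it to the same reference \cite{DS08}, which is exactly where the paper places that burden.
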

In particular, when $\kappa > 0$, solutions with different initial data get closer at an exponential speed. 
\begin{proof}
  This is a consequence of the~\ref{eq:EVI}. It follows from \cite[Prop.~3.1]{DS08} applied to the functional $\cF$ on the metric space $(\cP(\cX),\cW)$.
\end{proof}
Next we prove some consequences of Ricci bounds in terms of different
functional inequalities. These results can be seen as non-linear
discrete analogues of classical results of Bakry and \'Emery \cite{BE85} and of
Otto and Villani \cite{OV00}. They extend results that have been
obtained in \cite{EM11} for linear Markov chains, and are reminiscent
of results of Carrillo, McCann and Villani \cite{Carrillo2003} obtained for
McKean--Vlasov equations in a continuous setting.

Let $\cF$ be the free energy functional associated with $(\cX,Q,\pi)$
given by 
\begin{equation*}
 \cF(\mu)=\sum_{x\in\mathcal{X}}\mu_x \log\mu_x + U(\mu), \qquad\text{with}\qquad U(\mu)=\sum_{z\in\mathcal{X}}\mu_z\, K_z(\mu),
\end{equation*}
 and recall that $\cF$ attains its minimum on $\cP(\cX)$. We set
 \begin{align*}
   \cF_*(\mu):=\cF(\mu)-\min\limits_{\nu\in\cP(\cX)}\cF(\nu)\;.
 \end{align*}
 so that $\min \cF_* = 0$. Recall that $\cI$ is the discrete Fisher information, given by
 \begin{align*}
    \mathcal{I}(\mu) =\frac{1}{2} \sum\limits_{x,y\in\cX} \Theta\bra*{\mu_{x}Q_{xy}(\mu), \mu_{y}Q_{yx}(\mu)} \;,\qquad  \Theta(a,b) = (a-b)(\log a - \log b)\;, 
  \end{align*}
  provided $\mu\in\cP_*(\cX)$ and $\cI(\mu)=+\infty$ else. Recall
  that $\cI$ gives the dissipation of $\cF$ along a solution $(\mu_t)$
  to the non-linear Fokker--Planck equation
  $\dot \mu_t=\mu_tQ(\mu_t)$.  More precisely, we have
\begin{align*}
  \pderiv{}{t} \cF(\mu_t) &= -\cI(\mu_t)\;.
\end{align*}
 Note further that with $\rho=\mu/\pi(\mu)$ we have the expression $\cI(\mu)=\cA(\mu,-\log\rho)$.
The next result relates $\cF$, $\cI$ and the transport distance $\cW$
under a Ricci bound.
\begin{theorem}\label{thm:FWI}
  Assume that $\Ric(\cX,Q,\pi)\geq\kappa$ for some $\kappa \in \R$. Then the $\cF\cW\cI$ inequality holds with constant $\kappa\in\R$, i.e.~for all $\mu,\nu\in \cP(\cX)$,
    \begin{align*}\tag*{$\cF\cW\cI(\kappa)$}\label{eq:FWI}
      \cF(\mu)\leq \cF(\nu) +\cW(\mu,\nu)\sqrt{\cI(\mu)} -\frac{\kappa}{2}\cW(\mu,\nu)^2\;.
    \end{align*}
\end{theorem}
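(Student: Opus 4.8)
The plan is to establish the stated $\cF\cW\cI(\kappa)$ inequality as the discrete counterpart of the Otto--Villani HWI inequality, by combining the geodesic $\kappa$-convexity of $\cF$ (which is precisely the hypothesis $\Ric(\cX,Q,\pi)\ge\kappa$) with a Cauchy--Schwarz estimate that identifies $\sqrt{\cI(\mu)}$ as the metric slope of $\cF$ at $\mu$. First I would dispose of the boundary: if $\mu\notin\cP_*(\cX)$ then $\cI(\mu)=+\infty$ and the right-hand side is $+\infty$, so the inequality holds trivially. Hence I may assume $\mu\in\cP_*(\cX)$, and it will suffice to treat $\nu\in\cP_*(\cX)$ as well, extending to general $\nu$ at the very end by approximation.

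Write $g(t):=\cF(\mu_t)$ along a constant-speed $\cW$-geodesic $(\mu_t)_{t\in[0,1]}$ with $\mu_0=\mu$ and $\mu_1=\nu$, and let $(\psi_t)$ be the associated potential, so that $(\mu,\psi)\in\CE$. Rearranging the defining curvature inequality gives $g(t)-g(0)\le t\bigl(\cF(\nu)-\cF(\mu)\bigr)-\tfrac{\kappa}{2}t(1-t)\cW(\mu,\nu)^2$; dividing by $t>0$ and letting $t\downarrow0$ yields the upper slope bound
\[
 g'(0^+)\ \le\ \cF(\nu)-\cF(\mu)-\frac{\kappa}{2}\cW(\mu,\nu)^2 .
\]

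For the matching lower bound I would use the first variation. By the chain rule together with the continuity equation $\dot\mu_0=\cK[\mu_0]\psi_0$ (cf.\ Lemma~\ref{lem:ent-first-var}) we have $g'(0^+)=\ip{D\cF(\mu_0),\cK[\mu_0]\psi_0}$. Applying Cauchy--Schwarz for the positive semidefinite bilinear form $(\phi_1,\phi_2)\mapsto\ip{\phi_1,\cK[\mu_0]\phi_2}$ and invoking the identities $\cI(\mu_0)=\ip{D\cF(\mu_0),\cK[\mu_0]D\cF(\mu_0)}$ and $\cA(\mu_0,\psi_0)=\ip{\psi_0,\cK[\mu_0]\psi_0}$, I obtain
\[
 g'(0^+)\ \ge\ -\sqrt{\cI(\mu_0)}\,\sqrt{\cA(\mu_0,\psi_0)}\ =\ -\sqrt{\cI(\mu)}\;\cW(\mu,\nu),
\]
where the last equality uses that along a constant-speed geodesic $\cA(\mu_t,\psi_t)$ is constant in $t$ and equal to $\cW(\mu,\nu)^2$. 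Chaining the two slope bounds and rearranging produces exactly the claimed $\cF\cW\cI(\kappa)$ inequality.

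The main obstacle is justifying the slope computation rigorously at the interior endpoint: one must know that $\mu_t$ is smooth and remains in $\cP_*(\cX)$ near $t=0$, so that the potential $\psi_0$ and the right derivative $g'(0^+)$ exist and the first-variation formula applies. Since $\mu_0\in\cP_*(\cX)$, this is guaranteed in a neighbourhood of $t=0$ by the smoothness of geodesics on the Riemannian manifold $\cP_*(\cX)$ already invoked in the geodesic equation lemma. The final extension to $\nu\in\partial\cP(\cX)$ then follows by approximating $\nu$ with interior measures $\nu_n\to\nu$, applying the inequality for each $\nu_n$, and passing to the limit using continuity of $\cF$ and $\cW$, with $\cI(\mu)$ held fixed.
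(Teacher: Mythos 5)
Your proposal is correct, but it takes a genuinely different route from the paper. You run the classical Otto--Villani HWI argument directly from the definition of the curvature bound: differentiate $\cF$ along a constant-speed geodesic from $\mu$ to $\nu$ at $t=0$, bound $g'(0^+)$ from above by the defining $\kappa$-convexity and from below by Cauchy--Schwarz for the positive semidefinite form $\ip{\cdot\,,\cK[\mu]\,\cdot}$, using $\cI(\mu)=\ip{D\cF(\mu),\cK[\mu]D\cF(\mu)}$ and $\cA(\mu_0,\psi_0)=\cW(\mu,\nu)^2$. The paper never touches geodesics: it invokes the equivalence of $\Ric(\cX,Q,\pi)\geq\kappa$ with \ref{eq:EVI} from Theorem~\ref{thm:Ric-equiv}, applies the EVI at $t=0$ along the solution of $\dot\mu_t=\mu_tQ(\mu_t)$ starting at $\mu$, and controls $-\tfrac12\tfrac{\mathup{d}^+}{\mathup{d}t}\Big|_{t=0}\cW(\mu_t,\nu)^2$ by the triangle inequality together with the action bound $\cW(\mu_s,\mu)\leq\int_0^s\sqrt{\cI(\mu_r)}\dx{r}$, which yields the slope estimate \eqref{eq:deriv-W}. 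The trade-off is this: your argument is more self-contained in that it needs only the definition of $\Ric\geq\kappa$ and not the EVI equivalence (the deep Daneri--Savar\'e part of Theorem~\ref{thm:Ric-equiv}), but it must confront geodesic regularity, namely existence of a geodesic and smoothness plus existence of the potential $\psi_t$ near $t=0$; your resolution (openness of $\cP_*(\cX)$, so the geodesic stays interior for small $t$, combined with interior smoothness of geodesics) is at the same level of rigor as the paper's own geodesic-equation lemma, and in fact your restriction to $\nu\in\cP_*(\cX)$ is superfluous since only the behaviour near $t=0$ matters, so your closing approximation step can be dropped. The paper's route avoids all geodesic regularity questions (a $\cW$-geodesic could a priori touch the boundary away from its endpoints, though this disturbs neither proof) at the price of leaning on the full strength of Theorem~\ref{thm:Ric-equiv}; it also produces the flow estimate \eqref{eq:deriv-W} as a byproduct, which is reused later in the proof of the transport-entropy inequality, whereas your geodesic argument does not yield that estimate.
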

\begin{proof}
  Fix $\mu,\nu\in\cP(\cX)$ and assume without restriction that
  $\mu\in\cP_*(\cX)$ since otherwise there is nothing to prove. Denote
  by $\mu_t$ the solution to $\dot\mu_t=\mu_tQ(\mu_t)$ with
  $\mu_0=\mu$ and set $\rho_t=\mu_t/\pi(\mu_t)$. Theorem
  \ref{thm:Ric-equiv} yields that~\ref{eq:EVI} holds, so in
  particular for $t=0$:
  \begin{align*}
    \cF(\mu)\leq \cF(\nu)-\frac12\frac{\mathup{d}^+}{\mathup{d}t}\Big|_{t=0}\cW(\mu_t,\nu)^2 - \frac{\kappa}{2}\cW(\mu,\nu)^2\;.
  \end{align*}
 From the triangle inequality and the fact that $t\mapsto \mu_t$ is continuous with respect to $\cW$ we obtain
 \begin{align*}
   -\frac12\frac{\mathup{d}^+}{\mathup{d}t}\Big|_{t=0}\cW(\mu_t,\nu)^2 &= \liminf_{s\searrow0}\frac{1}{2s}\Bigl(\cW(\mu,\nu)^2-\cW(\mu_s,\nu)^2\Bigr)\\
 &\leq 
\limsup_{s\searrow0}\frac{1}{s}\cW(\mu_s,\mu)\cdot\cW(\mu,\nu)\;. 
 \end{align*}
Now, note that since $(\mu_t,-\log\rho_t)\in\CE$ we can estimate
\begin{align*}
  \cW(\mu_s,\mu)\leq \int_0^s\sqrt{\cA(\mu_r,\log\rho_r)}\ dr=\int_0^s\sqrt{\cI(\mu_r)}\ dr\;.
\end{align*}
Since $t\mapsto\cI(\mu_t)$ is a continuous function, we obtain
\begin{align}\label{eq:deriv-W}
  \limsup_{s\searrow0}\frac{1}{s}\cW(\mu_s,\mu)\leq \sqrt{\cI(\mu)}\;,
\end{align}
which yields the claim.
\end{proof}

\begin{theorem}
 Assume that $\Ric(\cX,Q,\pi)\geq \lambda$ for some $\lambda>0$. Then the following hold:
 \begin{itemize}
 \item[(i)] there exists a unique stationary point $\pi^*$, it is the unique minimizer of $\cF$;
 \item[(ii)] the \emph{modified logarithmic Sobolev inequality} with constant $\lambda>0$ holds, i.e.~for all $\mu\in \cP(\cX)$,
    \begin{align}\tag*{$\MLSI(\lambda)$}\label{eq:MLSI}
      \cF_*(\mu)\leq \frac{1}{2\lambda}\cI(\mu)\;;
    \end{align}
\item[(iii)] for any solution $(\mu_t)_{t\geq0}$ to $\dot\mu_t=\mu_tQ(\mu_t)$ we have exponential decay of the free energy:
  \begin{align}\label{eq:exp-ent}
    \cF_*(\mu_t)\leq e^{-2\lambda t} \cF_*(\mu_0)\;;
  \end{align}
 \item[(iv)] the \emph{transport-entropy inequality} with constant $\lambda>0$ holds, i.e.~for all $\mu\in\cP(\cX)$,
    \begin{align}\tag*{ET$(\lambda)$}\label{eq:ET}
      \cW(\mu,\pi_*) \leq \sqrt{\frac{2}{\lambda}\cF_*(\mu)}\;.
    \end{align}
 \end{itemize}
\end{theorem}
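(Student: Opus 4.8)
The plan is to take Theorem~\ref{thm:FWI}, the $\cF\cW\cI(\lambda)$ inequality with $\kappa=\lambda>0$, as the master estimate from which all four assertions follow by the now-classical entropy method of Otto--Villani and Carrillo--McCann--Villani. I would establish the four items in the order (i)--(iv), each building on the previous ones, so that the only genuinely new analytic input beyond $\cF\cW\cI(\lambda)$ is the dissipation relation $\tfrac{d}{dt}\cF_*(\mu_t)=-\cI(\mu_t)$ and the qualitative convergence already recorded in Proposition~\ref{prop:qualitative:longtime}.

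For (i), I first note that $\cF$ is continuous on the compact simplex $\cP(\cX)$ and hence attains its minimum at some $\pi^*$. Since $\partial_{\mu_x}\bigl(\mu_x\log\mu_x\bigr)=\log\mu_x+1\to-\infty$ as $\mu_x\to0^+$ while $U$ has bounded derivatives, moving mass into a vanishing coordinate strictly decreases $\cF$, so the minimiser cannot lie on the boundary; thus $\pi^*\in\cP_*(\cX)$ and, by Proposition~\ref{prop:charact:stationary}, $\pi^*$ is a stationary point with $\cI(\pi^*)=0$. Uniqueness then comes directly from $\cF\cW\cI(\lambda)$: any two stationary points $\pi_1,\pi_2\in\cP_*(\cX)$ satisfy $\cI(\pi_i)=0$, so applying the inequality once in each direction gives $\cF(\pi_1)\le\cF(\pi_2)-\tfrac{\lambda}{2}\cW(\pi_1,\pi_2)^2$ and symmetrically; adding yields $\lambda\,\cW(\pi_1,\pi_2)^2\le0$, forcing $\pi_1=\pi_2$. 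Hence there is a single stationary point, which is the unique minimiser of $\cF$; write $\pi_*=\pi^*$.

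For (ii), I apply $\cF\cW\cI(\lambda)$ with $\nu=\pi_*$ and use $\cF_*(\pi_*)=0$ to obtain, for every $\mu\in\cP_*(\cX)$,
\[
  \cF_*(\mu)\le \cW(\mu,\pi_*)\sqrt{\cI(\mu)}-\tfrac{\lambda}{2}\cW(\mu,\pi_*)^2 .
\]
Estimating the right-hand side by Young's inequality $ab-\tfrac{\lambda}{2}b^2\le\tfrac{1}{2\lambda}a^2$ with $a=\sqrt{\cI(\mu)}$ and $b=\cW(\mu,\pi_*)$ gives $\MLSI(\lambda)$; the case $\mu\notin\cP_*(\cX)$ is trivial since then $\cI(\mu)=+\infty$. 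Item (iii) is then immediate from the dissipation relation $\tfrac{d}{dt}\cF_*(\mu_t)=-\cI(\mu_t)$: by $\MLSI(\lambda)$ we have $\cI(\mu_t)\ge 2\lambda\,\cF_*(\mu_t)$, so $\tfrac{d}{dt}\cF_*(\mu_t)\le-2\lambda\,\cF_*(\mu_t)$, and Gr\"onwall's lemma produces the exponential decay.

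Finally, for (iv) I would run the Otto--Villani argument converting $\MLSI(\lambda)$ into the transport--entropy bound. Along the gradient flow $(\mu_t)_{t\ge0}$ starting from $\mu$, the pair $(\mu_t,-\log\rho_t)$ solves the continuity equation with action $\cA(\mu_t,-\log\rho_t)=\cI(\mu_t)$, so bounding $\cW$ by the length of this curve gives $\cW(\mu,\pi_*)\le\int_0^\infty\sqrt{\cI(\mu_t)}\,dt$. Writing $f(t)=\cF_*(\mu_t)$ and using $f'=-\cI$, the inequality $\MLSI(\lambda)$ rewrites at each time as $\sqrt{\cI(\mu_t)}\le \cI(\mu_t)/\sqrt{2\lambda f(t)}=-\sqrt{2/\lambda}\,\tfrac{d}{dt}\sqrt{f(t)}$, and integrating over $[0,\infty)$, with the boundary term at infinity vanishing because $f(t)\to0$ by (iii), yields exactly $\cW(\mu,\pi_*)\le\sqrt{(2/\lambda)\,\cF_*(\mu)}$. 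I expect the main obstacle to be precisely this last step: one must justify that the gradient-flow curve is $\cW$-absolutely continuous with metric speed $\sqrt{\cI(\mu_t)}$ all the way down to equilibrium and that it converges to $\pi_*$ in $\cW$, which is delicate because the Riemannian metric inducing $\cW$ degenerates at the boundary of $\cP(\cX)$. Here I would combine the qualitative convergence of Proposition~\ref{prop:qualitative:longtime} with the exponential decay of (iii) to secure both the convergence $\mu_t\to\pi_*$ and the integrability $\int_0^\infty\sqrt{\cI(\mu_t)}\,dt<\infty$.
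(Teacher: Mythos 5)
Your proposal is correct, and for items (ii)--(iv) it is essentially the paper's own proof: the paper likewise treats the $\cF\cW\cI(\lambda)$ inequality of Theorem~\ref{thm:FWI} as the master estimate, obtains $\MLSI(\lambda)$ by taking $\nu=\pi_*$ and applying Young's inequality, gets the exponential decay from the dissipation identity plus Gronwall, and proves ET$(\lambda)$ by the Otto--Villani argument. For (iv) the paper packages the computation slightly differently---it shows that $G(t)=\cW(\mu,\mu_t)+\sqrt{2\cF_*(\mu_t)/\lambda}$ is non-increasing via its upper right derivative, rather than integrating $\sqrt{\cI(\mu_t)}\le-\sqrt{2/\lambda}\,\tfrac{\mathup{d}}{\mathup{d}t}\sqrt{\cF_*(\mu_t)}$ over $[0,\infty)$---but this is the same estimate in differential versus integrated form; both rest on the metric-speed bound \eqref{eq:deriv-W} and on $\cW(\mu,\mu_t)\to\cW(\mu,\pi_*)$, which the paper secures by continuity of $\cW$ with respect to the Euclidean metric, additionally restricting first to $\mu\in\cP_*(\cX)$ (boundary data by approximation) and treating separately the degenerate case $\mu_t=\pi_*$, where your division by $\sqrt{\cF_*(\mu_t)}$ breaks down; these are exactly the technical points you flagged. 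The one genuine divergence is (i): the paper argues directly from the definition of the curvature bound, showing two distinct local minimizers of $\cF$ are impossible because $\lambda$-convexity along a connecting geodesic would force $\cF(\mu_{1-\epsilon})<\cF(\mu_1)$, whereas you deduce uniqueness by applying $\cF\cW\cI(\lambda)$ twice at two stationary points (where $\cI$ vanishes) and adding, forcing $\lambda\,\cW(\pi_1,\pi_2)^2\le 0$. Your route is equally valid and avoids invoking existence of geodesics between stationary points, though it uses that $\cW$ separates points; your existence step (interior minimizer via the boundary blow-up of the entropy derivative, then Proposition~\ref{prop:charact:stationary}) is in fact needed in the paper as well, where it is left implicit in the claim that the set of stationary points is non-empty.
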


\begin{proof}
  {\bf (i)} From Proposition \ref{prop:charact:stationary} we know that the set $\Pi_*$
  of stationary points is non-empty and that it coincides with the set
  of local minimizers of $\cF$. Assume by contradiction that $F$ has two distinct local minima
  at points $\mu_{0}$ and $\mu_{1},$ with
  $F(\mu_{0})\leq F(\mu_{1})$ and let $(\mu_s)_{s\in[0,1]}$ be a constant speed geodesic connecting $\mu_{0}$ and $\mu_{1}$. Then we infer from $\Ric(\cX,Q,\pi)\geq \lambda$ that 
  \[\begin{split}
    \cF(\mu_s)&\leq (1-s)\cF(\mu_{0})+ s\cF(\mu_{1})-\frac{\lambda}{2}s(1-s)\cW(\mu_{0},\mu_{1})^2\;.
    \end{split}
  \]
  Since $\mu_{1}$ is a local minimum, there is an $\epsilon>0$ such that $\cF(\mu_{1-\epsilon})\geq \cF(\mu_{1})$.
  This leads to
  \[
    \cF(\mu_{1})\leq \cF(\mu_{1-\epsilon})\leq \epsilon \cF(\mu_{0})+ (1-\epsilon)\cF(\mu_{1})-\frac{\lambda}{2}\epsilon(1-\epsilon)<\cF(\mu_{1})\;,
  \]
  a contradiction. Hence, $\Pi_*=\{\pi_*\}$ is a singleton and $\pi_*$
  is the unique global minimizer of $\cF$.  \medskip
  
  {\bf (ii)} By Theorem \ref{thm:FWI}, we have that~\ref{eq:FWI}
  holds. Applying~\ref{eq:FWI} with $\mu\in\cP(\cX)$ and $\nu=\pi_*$, noting that $\cF_*(\pi_*)=0$, and using Young's inequality
  \begin{align*}
    xy\leq cx^2 +\frac1{4c}y^2\qquad \forall x,y\in \R,\ c>0,
  \end{align*}
  with $x=\cW(\mu,\pi_*)$, $y=\sqrt{\cI(\mu)}$ and $c=\lambda/2$
  yields the claim.  \medskip

{\bf (iii)} 
  From~\ref{eq:MLSI} we infer that for a solution $(\mu_t)_t$ we have
  \begin{align*}
    \pderiv{}{t}\cF_*(\mu_t)= -\cI(\mu_t) \leq -2\lambda \cF_*(\mu_t)\;,
  \end{align*}
  and we obtain \eqref{eq:exp-ent} as a consequence of Gronwall's
  lemma.
 \medskip

 {\bf (iv)} It suffices to establish~\ref{eq:ET} for any
 $\mu\in\cP_*(\cX)$. The inequality for general $\mu$ can then be
 obtained by approximation, taking into account the continuity of
 $\cW$ with respect to the Euclidean metric on $\cP(\cX)$. So fix
 $\mu\in\cP_*(\cX)$, and let $\mu_t$ be the solution to the non-linear
 Fokker--Planck equation starting from $\mu$.  From Proposition
 \ref{prop:qualitative:longtime} we have that $\mu_t\to\pi_*$ as
 $t\to\infty$ and that
 \begin{align}\label{eq:asymptotic}
    \cF_*(\mu_t)\to 0 \quad\mbox{ and }\quad \cW(\mu,\mu_t)\to
    \cW(\mu,\pi_*)\;.
  \end{align}
  The last property follows from the continuity of $\cW$ with respect
  to the Euclidean distance. We now define the function
  $G:\R_+\to\R_+$ by
  \begin{align*}
    G(t)~:=~\cW(\mu,\mu_t) + \sqrt{\frac{2}{\lambda}\cF_*(\mu_t)}\;.
  \end{align*}
  Obviously we have $G(0)=\sqrt{\frac{2}{\lambda}\cF_*(\mu)}$ and by
  \eqref{eq:asymptotic} we have that $G(t)\to\cW(\mu,\pi_*)$ as
  $t\to\infty$. Hence it is sufficient to show that $G$ is
  non-increasing. To this end we show that its upper right derivative
  is non-positive. If $\mu_t\neq \pi_*$ we deduce from \eqref{eq:deriv-W} that
  \begin{align*}
    \frac{\mathup{d}^+}{\mathup{d}t} G(t)~\leq~\sqrt{\cI(\mu_t)} - \frac{\cI(\mu_t)}{\sqrt{2\lambda\cF_*(\mu_t)}}~\leq~0\ ,
  \end{align*}
  where we used~\ref{eq:MLSI} in the last inequality. If
  $\mu_t=\pi_*$, then the relation also holds true, since this implies
  that $\mu_r=\pi_*$ for all $r\geq t$.
\end{proof}

\section{Some examples of curvature bounds}\label{s:bounds}

We shall now compute lower bounds on the curvature for several examples of mean-field dynamics, inspired by classical models of statistical physics. 

\subsection{Curie-Weiss model}\label{sec:CW}

Let us consider the following example also mentioned in \cite[Example
4.2]{BDFR15b}, which is the infinite particle limit of the classical
Curie-Weiss model, one of the simplest examples of Markovian dynamic
exhibiting a phase transition. Let us take $\mathcal{X}=\{0,1\}$ and
define $K$ for $\beta>0$ by
\[
  K_x(\nu)=V_x+\beta\sum_{y\in\mathcal{X}} W_{xy}\,  \nu_{y}, \qquad  (x,\nu)\in\cX\times\cP(\cX),
\]
with $V\equiv0,\, W(0,0)=0=W(1,1),$ and $W(0,1)=1=W(1,0)$. Hence, we have
\[
  U(\nu) = 2\beta \nu_0 \nu_1 \;. 
\]

The free energy $\cF(\mu)$ for the Curie-Weiss model is given by
\begin{equation*}
\begin{split}
\sum_{x\in\mathcal{X}}\bra[\big]{\log\mu(x) + K(\mu;x)}\mu(x)&=\sum_{x\in\mathcal{X}}(\log\mu(x)  +\beta\sum_{y\in\mathcal{X}}W(x,y)\mu(y)\mu(x)\\
&=\mu(0)\log\mu(0)+\mu(1)\log\mu(1)+2\beta\mu(0)\mu(1).
\end{split}
\end{equation*}
Since $\mu(0)+\mu(1)=1$, we have that the free energy is essentially
given by the function $f_\beta: [0,1]\to \R$
\begin{equation*}
 f_\beta(u) = u \log u + (1-u) \log (1-u) + 2 \beta u (1-u) \quad\text{and}\quad f''_\beta(u) = \frac{1}{u(1-u)} - 4 \beta,
\end{equation*}
Hence, $f_\beta$ is convex on $[0,1]$ for $\beta \in [0,1]$ and non-convex for $\beta>1$.

The local detailed balance state $\pi(\mu)$~\eqref{e:def:piH} is given by
\begin{equation*}
  \pi_x(\mu) = \frac{1}{Z(\mu)} \exp\bra*{- H_x(\mu)} =\frac{1}{Z(\mu)} \exp\bra*{-2 \beta\sum_{y\in\mathcal{X}}W(x,y)\mu(y) } \ . 
\end{equation*}
Therefore, it holds 
\begin{align*}
  \pi_0(\mu)&=\frac{\exp\bra*{-2 \beta\mu(1) }}{\exp\bra*{-2 \beta \mu(0) }+\exp\bra*{-2 \beta \mu(1) }} \;, \\
  \pi_1(\mu)&=\frac{\exp\bra*{-2 \beta\mu(0) }}{\exp\bra*{-2 \beta \mu(0) }+\exp\bra*{-2 \beta \mu(1) }} \;. 
\end{align*}
We use Glauber rates and set 
\begin{align*}
 p(\mu) &= Q(\mu;0,1) = \sqrt{\frac{\pi_1(\mu)}{\pi_0(\mu)}} = \exp\bra[\big]{-\beta(\mu(0) - \mu(1))}  \;, \\
 q(\mu) &= Q(\mu;1,0) = \sqrt{\frac{\pi_0(\mu)}{\pi_1(\mu)}} = \exp\bra[\big]{\beta(\mu(0) - \mu(1))} = \frac{1}{p(\mu)} \;. 
\end{align*}
With this choice, we can estimate the Ricci curvature of the limit with the help of Lemma~\ref{lem:kappa:two-point}.
\begin{proposition}[$\lambda$-Convexity of Curie-Weiss model with Glauber rates]
  It holds for $\beta \in [0,1]$
  \begin{equation}\label{e:CW:kappaGlauber}
    \kappa_{\Glauber} = 2 (1 - \beta).
  \end{equation}
\end{proposition}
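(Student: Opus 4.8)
The plan is to apply Lemma~\ref{lem:kappa:two-point} and to show that the infimum in~\eqref{e:kappa:two-point} is attained at the symmetric measure $\mu = (\tfrac12,\tfrac12)$ with value $2(1-\beta)$. First I would record the rate data. Writing $u = \mu(0)$ and $m = \mu(0)-\mu(1) = 2u-1 \in (-1,1)$, the Glauber rates are $p(\mu) = e^{-\beta m}$ and $q(\mu) = e^{\beta m}$, so a direct differentiation gives $p'(\mu) = -2\beta\, p(\mu)$ and $q'(\mu) = -2\beta\, q(\mu)$, whence $p'/p = q'/q = -2\beta$. The useful observation is that the two fluxes $a := \mu(0)p(\mu) = u e^{-\beta m}$ and $b := \mu(1)q(\mu) = (1-u)e^{\beta m}$ satisfy $ab = u(1-u) = \tfrac14(1-m^2)$, so that $\Lambda(\mu)(0,1) = \Lambda(a,b)$ and the factor $\tfrac{1}{\mu(0)\mu(1)} = \tfrac1{ab}$ appearing in~\eqref{e:kappa:two-point} is governed by the same quantity.

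Substituting this into~\eqref{e:kappa:two-point}, I would split the integrand as $g(m) = T_1 + T_2 + T_3$. Using $u e^{-\beta m}+(1-u)e^{\beta m} = \cosh(\beta m) - m\sinh(\beta m)$, the two elementary terms combine into
\begin{align*}
  T_1 + T_2 = (1-\beta)\cosh(\beta m) + \beta m \sinh(\beta m)\;,
\end{align*}
while the logarithmic-mean term becomes $T_3 = \Lambda(a,b)\bigl(\tfrac1{2ab} - 2\beta\bigr)$. Evaluating at $m=0$, where $a=b=\tfrac12$ and $\Lambda=\tfrac12$, yields $g(0) = (1-\beta) + (1-\beta) = 2(1-\beta)$, so it remains to prove $g(m) \geq 2(1-\beta)$ for all $m \in (-1,1)$.

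The key point, and the step where $\beta \le 1$ enters, is that $\tfrac1{2ab} = \tfrac{2}{1-m^2} \geq 2 \geq 2\beta$, so the coefficient multiplying $\Lambda(a,b)$ in $T_3$ is nonnegative. I may therefore bound the transcendental term below by the geometric mean, $\Lambda(a,b) \geq \sqrt{ab} = \tfrac12\sqrt{1-m^2}$, obtaining
\begin{align*}
  g(m) \geq (1-\beta)\cosh(\beta m) + \beta m\sinh(\beta m) + \frac{1}{\sqrt{1-m^2}} - \beta\sqrt{1-m^2} =: h(m)\;.
\end{align*}
I would then finish by writing $h(m) - 2(1-\beta)$ as a sum of three manifestly nonnegative pieces: $(1-\beta)(\cosh(\beta m)-1) \geq 0$, then $\beta m\sinh(\beta m) \geq 0$, and finally, with $w = \sqrt{1-m^2} \in (0,1]$, the factorization
\begin{align*}
  \frac1w - \beta w - (1-\beta) = \frac{(1-w)(1+\beta w)}{w} \geq 0\;.
\end{align*}
All three vanish exactly at $m=0$, so $g(m) \geq 2(1-\beta)$ with equality at the symmetric measure, giving $\kappa_{\Glauber} = 2(1-\beta)$.

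The main obstacle is the presence of the logarithmic mean $\Lambda$, which blocks a purely elementary minimization; the device of replacing it by the geometric mean is admissible only because its coefficient is nonnegative, and this is precisely where the hypothesis $\beta \le 1$ is indispensable (the same bound also forces $(1-\beta)(\cosh(\beta m)-1) \geq 0$). For $\beta > 1$ the coefficient changes sign near $m=0$ and the strategy fails, consistent with the loss of convexity of $f_\beta$ observed above. I would also note that the boundary $m \to \pm 1$ needs no separate treatment, since the lower bound $g(m) \geq 2(1-\beta)$ holds throughout the interior and is attained there.
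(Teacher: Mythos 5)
Your proof is correct. It shares the paper's overall skeleton --- reduction to a one-variable minimization via Lemma~\ref{lem:kappa:two-point}, the identical computation $\frac{p+q}{2} - \beta\bigl(\mu(0)p + \mu(1)q\bigr) = (1-\beta)\cosh(\beta m) + \beta m\sinh(\beta m)$, and evaluation at the symmetric measure --- but the decisive estimate on the logarithmic-mean term is genuinely different. The paper writes that term as $\frac{g_2(x)}{\beta x - \arctanh(x)}\bigl(\frac{1}{1-x^2}-\beta\bigr)$ with $g_2(x)=\sinh(\beta x)-x\cosh(\beta x)$, substitutes $x=\tanh(y)$, and uses $\sinh(z)/z\geq 1$ together with hyperbolic identities to prove $\frac{g_2(x)}{\beta x - \arctanh(x)} \geq 1-x^2$; unwinding the notation, this amounts to the lower bound $\Lambda(a,b)\geq 2ab$ on the logarithmic mean of the two fluxes. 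You instead exploit the exact cancellation $ab = \mu(0)p\cdot\mu(1)q = \mu(0)\mu(1) = \frac14(1-m^2)$ and invoke the classical geometric--logarithmic mean inequality $\Lambda(a,b)\geq\sqrt{ab}$, finishing with the elementary factorization $\frac{1}{w}-\beta w-(1-\beta) = \frac{(1-w)(1+\beta w)}{w}\geq 0$ for $w\in(0,1]$. Since $ab\leq\frac14$ implies $\sqrt{ab}\geq 2ab$, your bound on $\Lambda$ is in fact the sharper of the two, and it trades the paper's bespoke hyperbolic-trigonometric manipulation for a standard inequality plus algebra; the paper's route, on the other hand, keeps everything inside the $\tanh$-parametrization, which is the natural chart for Glauber rates. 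In both arguments the hypothesis $\beta\leq 1$ enters at exactly the same two points: the nonnegativity of the coefficient $\frac{1}{2ab}-2\beta$ multiplying $\Lambda$ (without which no lower bound on $\Lambda$ may be inserted), and the nonnegativity of $(1-\beta)\bigl(\cosh(\beta m)-1\bigr)$.
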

As a consequence of this curvature bound, one can derive the modified
logarithmic Sobolev inequality for the nonlinear dynamic. This
inequality could also be derived from a logarithmic Sobolev inequality
for the particle Gibbs sampler of \cite{Marton2015} and passing to the
limit in the number of particles. In \cite{Kraaij2016}, the mLSI was
also derived via convexity of the entropy, but along a different
family of interpolations of probability measures. At a technical
level, the proof of \cite{Kraaij2016} requires differentiating the
entropy three times rather than two, which involves more technical
estimates (this is not much of an issue for a two-point space system
like Curie-Weiss, but gets much more complicated for more involved
systems, as the ones we shall see later in this section).
\begin{proof}
  We set $\mu(0)=u$ and $\mu(1)=1-u$, for which the rates become
  $p(\mu)=\exp(-\beta(2u-1))=1/q(u)$.  First, we note that with the
  notation of Lemma~\ref{lem:kappa:two-point}, we have
  $p'(\mu)=-2\beta p(\mu)$ and $q'(\mu)=-2\beta q(\mu)$.
  
  The expression in the infimum of~\eqref{e:kappa:two-point} to optimize becomes  
\begin{equation*}
  \kappa(u) = \frac{p+q}{2} - \beta\bra*{u p  +(1-u) q} + \frac{\Lambda\bra*{up,(1-u)q}}{2}\bra*{ \frac{1}{u(1-u)}  - 4 \beta} .
\end{equation*}
It will be convenient to do the variable substitution $x= 2u-1$. For
obtaining the expression in a compact manner, we introduce two
auxiliary functions
\begin{align*}
    g_1(x) := \cosh(\beta x) - x \sinh(\beta x) \qquad\text{and}\qquad g_2(x) := \sinh(\beta x) - x \cosh(\beta x) .
\end{align*}
We then obtain, using the identities $up+(1-u)q=g_1(x)$,
$up-(1-u)q=-g_2(x)$ and
$\arctanh(x) = \frac{1}{2} \log\frac{1+x}{1-x}$ and after some
rewriting,
\begin{equation}\label{e:CW:p1}
  \kappa(x) = \cosh(\beta x ) -  \beta g_1(x) +  \frac{g_2(x)}{\beta x - \arctanh(x)}\bra*{ \frac{1}{1-x^2} - \beta}  .
\end{equation}
A simple evaluation yields $\kappa(0) = 2(1-\beta)$, where we note
$\frac{g_2(x)}{\beta x - \arctanh(x)} \to 1$ as $x\to 0$.

For the lower bound, we proceed in several steps, we first observe
that
\begin{align*}
  \cosh(\beta x ) - \beta g_1(x) = (1-\beta) \cosh(\beta x) + \beta x
  \sinh(\beta x) \geq 1 - \beta \;.
\end{align*}
Now, the claim follows once we have shown
\begin{equation}\label{e:CW:p2}
  \frac{g_2(x)}{\beta x - \arctanh(x)} \geq 1-x^2 \;.
\end{equation}
Indeed, the last term in~\eqref{e:CW:p1}, combined with the above
estimate, is bounded from below by $1-(1-x^2)\beta \geq 1-\beta$,
which proves~\eqref{e:CW:kappaGlauber}. To prove~\eqref{e:CW:p2}, we
do another substitution and set $x=\tanh(y)$. Therefore, the function
$g_2$ becomes after transformations by hyperbolic trigonometric
identities
  \begin{equation*}
     g_2(\tanh y) = \frac{\sinh(\beta \tanh(y)-y)}{\cosh y} .
  \end{equation*}
  and we can estimate the left hand side of~\eqref{e:CW:p2} by
  \[
     \frac{g_2(x)}{\beta x - \arctanh(x)} = \frac{1}{\cosh(y)} \frac{\sinh\bra*{\beta \tanh(y)-y}}{\beta \tanh(y)-y} \geq \frac{1}{\cosh(y)} \;,
  \]
  where we used the bound $\sinh(z)/z \geq 1$ for all $z\in \R$. This can be further estimated by observing that $\cosh(y)\geq 1$ for all $y\in \R$ and by using the identity $1-\tanh^2 y = 1/\cosh^2 y$, to obtain
  \[
    \frac{1}{\cosh(y)} \geq \frac{1}{\cosh^2(y)} = 1-\tanh^2(y) = 1 -x^2 \;,
  \]
  by the substitution $x=\tanh(y)$, which proves~\eqref{e:CW:p2}.   
\end{proof}
\subsection{General zero-range/misanthrope processes}
In this section, we consider mean-field limits of particle systems with rate matrix of the form
\begin{equation*}
Q(\mu; x,y) = p(x,y) \, c(\mu_x,\mu_y)
\end{equation*}
These systems generalize usual linear Markov chains encoded in $p(x,y)$ by an additional dependency of the jump rate on the population density of the departure and arrival site of the jump. 
This model, first introduced in~\cite{CT85},  incorporates many examples, such as for instance the zero range
process, for which $c(\mu_x,\mu_y) = b(\mu_x)$, but also interacting
agent/voter models~\cite{Villemonais2017}, for which
$c(\mu_x,\mu_y) = a(\mu_y)$.

Since our method in this section is perturbative in nature, we restrict to the complete
graph as underlying graph, that is $p(x,y) =1$ for all $x\ne y$. In this case the mean-field limit from the $N$-particle system was
derived in~\cite{GJ18} and the limit equation was investigated in~\cite{Sch19}.
Since positive curvature is know in the
case of independent particles on the complete graph \cite{EM11}, we expect that for
$c(\mu_x,\mu_y) = T + \tilde c(\mu_x,\mu_y)$ with bounded
$\tilde c: \cP(\cX) \times \cP(\cX) \to [0,\infty)$, we should also obtain
positive entropic curvature for the nonlinear models when $T$ is
sufficiently large.

To have a gradient flow formulation, the chain has to satisfy the local detailed balance
condition~\eqref{e:DBC}
\begin{equation}\label{e:ZRP:lDBC}
  \pi_x(\mu) \, c(\mu_x,\mu_y) = \pi_y(\mu) \, c(\mu_y, \mu_x)  \;.
\end{equation}
For the further analysis, we will focus on the separable case, where for some $a,b:[0,1]\to \R_+$ holds
\begin{equation*}
  c(\mu_x,\mu_y) = b(\mu_x) \, a(\mu_y) \;.
\end{equation*}
It is easy to verify that~\eqref{e:ZRP:lDBC} is satisfied for 
\begin{equation*}
  \pi_x(\mu) = \frac{1}{Z(\mu)} \frac{a(\mu_x)}{b(\mu_x)} \quad\text{and}\quad Z(\mu) =\sum_{x\in \cX} \frac{a(\mu_x)}{b(\mu_x)} \;.
\end{equation*}
This is of the form~\eqref{e:def:potentialU} for a potential $U$ given e.g.~by
\begin{equation*}
  U(\mu) = \sum_{x\in \cX} u(\mu_x) \qquad\text{with}\qquad u(r) = \int_r^1 \log \bra*{\frac{a(s)}{b(s)}} \dx{s}\;, 
\end{equation*}
i.e.~for $K$ given by $K_x(\mu)=u(\mu_x)/\mu_x$.
\begin{Example}
There are two subclasses of models of particular interest:
\begin{align*}
  Q_{xy}(\mu) = a(\mu_y) \qquad\text{and}\qquad  Q_{xy}(\mu) = b(\mu_x) \;.
\end{align*}
Both models satisfy the local detailed balance condition~\eqref{e:DBC} for
\[
  \pi^a_x(\mu) = \frac{a(\mu_x)}{Z^a(\mu)} \qquad\text{with}\qquad Z^a(\mu) = \sum_x a(\mu_x)
\]
and 
\[
  \pi^b_x(\mu) = \frac{1}{Z^b(\mu) b(\mu_x)} \qquad\text{with}\qquad Z^b(\mu) = \sum_x \frac{1}{b(\mu_x)}
\]
For the first, the interacting agent model from~\cite{Villemonais2017}
is recovered by setting $a(\mu_y) = T/d + f(\mu_y)$, where
$d =\abs{\cX}$ is the (constant) degree of the complete graph. For
these models,~\cite{Villemonais2017} proves a spectral gap via another
notion of discrete curvature, but which is not strong enough to derive
the mLSI. In the second case, this dynamic corresponds to (a scaling
limit of) a zero range-process. This type of particle system is
commonly used in statistical physics as a toy model for understanding
various large-scale features of interacting systems (scaling limits,
long-time behavior, phase transitions). We refer to \cite{Liggett2004}
for an overview. Long-time behavior of the $N$-particle system in
various situations was studied for example in \cite{Caputo2007,
  Caputo2009, FM16, Hermon2018}. Recently, Hermon and Salez
\cite{Hermon2019} significantly improved on the state of the art using
a combination of the Lu-Yau martingale method and a monotone coupling
argument, establishing a modified logarithmic Sobolev inequality
independent of the number of particles for mean-field zero-range
processes in a non-perturbative setting, even in some inhomogeneous
situations where the curvature approach \emph{cannot} work.
\end{Example}
In this separable case, we can prove the following statement.
\begin{theorem}[Curvature for separable kernels]
   Assume the rates are separable, given by 
   \begin{equation*}
     Q(\mu;x,y) = b(\mu_x) a(\mu_y) \ .
   \end{equation*}
   Suppose that 
   \begin{equation*}
     0<\underline{a} \leq a(\cdot) \leq \bar a \qquad\text{and}\qquad 0<\underline{b} \leq b(\cdot) \leq \bar b \ .
   \end{equation*}
   Moreover, assume that
   \begin{equation}\label{e:ZRP:lambda}
     \lambda := \frac{2 \, \overline{a} \Lip b + \overline{a} \Lip a}{2 \, \underline{a} \, \underline{b}} \leq 1  \ .
   \end{equation}
   Then $\Ric \geq \kappa$ in the sense of Theorem~\ref{thm:Ric-equiv} with $\kappa$ given by
   \begin{equation}\label{e:ZRP:kappa}
     \kappa := d\pra*{  \underline{a}\, \underline{b} - (1+\lambda)\frac{\bar{a}\, \bar{b}}{2} -  \frac{\bar a}{2} \bra*{ 2 +  \frac{ \bar b}{\underline{b}}} \Lip b -\frac{\overline{b}}{2} \bra*{1 +  \frac{\bar a}{\underline{a}}} \Lip a }
   \end{equation}
   Especially, in the regime $\frac{\max\set*{\Lip a , \Lip b}}{\min\set{\underline{a}, \underline{b}}} =: \eta \ll 1$ it holds
   \begin{equation}\label{e:ZRP:kappa:asymptotic}
     \kappa \geq d \, \underline{a} \, \underline{b} \, \bra[\big]{ 1 +  O(\eta)}\;.
   \end{equation}  
\end{theorem}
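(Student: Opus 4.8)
The plan is to verify the Bochner-type criterion~(2) of Theorem~\ref{thm:Ric-equiv}, that is, to establish $\cB(\mu,\psi)\geq\kappa\,\cA(\mu,\psi)$ for every $\mu\in\cP_*(\cX)$ and $\psi\in\R^\cX$, with $\kappa$ as in~\eqref{e:ZRP:kappa}. The first step is to feed the separable rates $Q(\mu;x,y)=b(\mu_x)a(\mu_y)$ into the definition~\eqref{e:curv:B} of $\cB$. Here the derivatives are explicit, $\partial_{\mu_z}Q(\mu;x,y)=b'(\mu_x)a(\mu_y)\delta_{xz}+b(\mu_x)a'(\mu_y)\delta_{yz}$, so that the quantities $DQ$, $R\Lambda(\mu)$ and $M(\mu)$ entering $\cB$ become polynomial expressions in $a,b,a',b'$ evaluated along the edges; crucially, the nonlinearity enters only through $a'$ and $b'$, whose sizes are controlled by $\Lip a$ and $\Lip b$. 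One also records the elementary bound $\abs{\hat L_\mu\mu(x)}\leq d\,\bar a\,\bar b$ with $d=\abs{\cX}$, which produces the overall factor $d$ in~\eqref{e:ZRP:kappa}.

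Second, I would split $\cB=\cB_{\mathrm{lin}}+\cB_{\mathrm{cor}}$, where $\cB_{\mathrm{lin}}$ collects the first two terms of~\eqref{e:curv:B} (the $\hat L\Lambda$-term and $-\ip{\nabla\psi,\Lambda(\mu)\cdot\nabla L_\mu\psi}$), which are exactly those surviving in the linear theory of~\cite{EM11}, while $\cB_{\mathrm{cor}}$ collects the two genuinely nonlinear terms $\tfrac12\ip{\nabla\psi,R\Lambda(\mu)\cdot\nabla\psi}$ and $\ip{\nabla\psi,M(\mu)\nabla\psi}$. For $\cB_{\mathrm{lin}}$ I would freeze $\mu$ and treat $Q(\mu)$ as a density-dependent rate on the complete graph, adapting the pointwise estimates of~\cite{FM16, EHMT17}: using the homogeneity relation $a\,\partial_1\Lambda+b\,\partial_2\Lambda=\Lambda$ together with the identities $\partial_1\Lambda(a,b)(a-b)=\Lambda-\Lambda^2/a$ and $\partial_2\Lambda(a,b)(a-b)=-\Lambda+\Lambda^2/b$ already exploited in Lemma~\ref{lem:kappa:two-point} (whence $0\leq\partial_i\Lambda\leq1$), this part is bounded below by $d\bigl(\underline a\,\underline b-c_0\,\bar a\,\bar b\bigr)\cA(\mu,\psi)$ for an explicit $O(1)$ constant $c_0$, the positive leading contribution $d\,\underline a\,\underline b$ arising from the $d$-fold sum defining $L_\mu$.

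Third, I would estimate the correction $\cB_{\mathrm{cor}}$. Each summand carries at least one factor $a'$ or $b'$, hence is dominated by the corresponding Lipschitz constant times a quadratic form in $\nabla\psi$ with $\Lambda$-weights. Controlling the off-diagonal term $\ip{\nabla\psi,M(\mu)\nabla\psi}$—which pairs $\nabla\psi(z,w)$ with $\nabla\psi(x,y)$ across distinct edges and is not sign-definite—by Cauchy--Schwarz against $\cA(\mu,\psi)$ is the main technical obstacle; here one reduces every weight to the uniform bounds via $\Lambda(\mu)(x,y)\leq\bar a\,\bar b$ and $\min(a,b)\leq\Lambda\leq\max(a,b)$. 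The dimensionless combination $\lambda$ of~\eqref{e:ZRP:lambda} is precisely the quantity measuring the size of these corrections, so that $\abs{\cB_{\mathrm{cor}}}$ is bounded by $d$ times $\bigl(\tfrac{\bar a}{2}(2+\bar b/\underline b)\Lip b+\tfrac{\bar b}{2}(1+\bar a/\underline a)\Lip a+\lambda\tfrac{\bar a\bar b}{2}\bigr)\cA(\mu,\psi)$, and the hypothesis $\lambda\leq1$ guarantees that the bracket in~\eqref{e:ZRP:kappa} is the relevant lower bound rather than a trivial negative one. Collecting the two estimates and dividing by $\cA(\mu,\psi)>0$ yields $\cB\geq\kappa\,\cA$ with $\kappa$ exactly as in~\eqref{e:ZRP:kappa}.

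Finally, the asymptotic~\eqref{e:ZRP:kappa:asymptotic} follows by noting that, since $a$ and $b$ live on the bounded interval $[0,1]$, one has $\bar a-\underline a\leq\Lip a$ and $\bar b-\underline b\leq\Lip b$; hence $\bar a\,\bar b=\underline a\,\underline b\,(1+O(\eta))$ and $\lambda=O(\eta)$, so that every correction term appearing in~\eqref{e:ZRP:kappa} is of relative order $\eta$ against the leading contribution $d\,\underline a\,\underline b$. The delicate point throughout remains the third step: keeping the constants in the Cauchy--Schwarz estimates of the non-sign-definite $M$-term sharp enough to land on the stated~\eqref{e:ZRP:kappa}, rather than a cruder bound, by carefully exploiting the algebraic identities for the logarithmic mean together with the separable structure of $Q$.
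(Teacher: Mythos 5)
Your overall skeleton matches the paper's strategy: verify criterion (2) of Theorem~\ref{thm:Ric-equiv}, insert the separable rates into \eqref{e:curv:B}, estimate the four resulting terms, and extract the factor $d$ from sums over the complete graph and the bound $\abs{\hat L_\mu\mu(x)}\le d\,\bar a\,\bar b$. But your third step has a genuine gap, and it sits exactly where the difficulty of the theorem lies. The terms you propose to handle ``by Cauchy--Schwarz against $\cA(\mu,\psi)$'' are cross-edge quadratic forms: after Young's inequality they produce sums of the type
\begin{equation}
  \tI_2 \;=\; \frac14\sum_{x,y,z}\abs{\nabla\psi(x,y)}^2\bigl(\Lambda(\mu)(z,x)+\Lambda(\mu)(z,y)\bigr)\;,
\end{equation}
in which the gradient on the edge $(x,y)$ is weighted by logarithmic means of \emph{other} edges. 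No inequality of the form $\abs{\nabla\psi(x,y)}^2\Lambda(\mu)(x,z)\le C\,\cA(\mu,\psi)$ holds uniformly on $\cP_*(\cX)$: as $\mu_y\downarrow0$ the weight $\Lambda(\mu)(x,y)$ appearing in $\cA$ degenerates while $\Lambda(\mu)(x,z)$ stays of order one, so the ratio of the two weights is unbounded; the upper bound $\Lambda\le\bar a\,\bar b$ you invoke goes in the useless direction. Note also that such cross-edge terms arise not only from the nonlinear pieces $R\Lambda(\mu)$ and $M(\mu)$, but already inside what you call the linear part, namely from $\tII=-\ip{\nabla\psi,\Lambda(\mu)\cdot\nabla L_\mu\psi}$, because $b(\mu_x)-b(\mu_y)\neq0$. (A further slip: your parenthetical claim $0\le\partial_i\Lambda\le1$ is false, since $\partial_1\Lambda(s,t)\to\infty$ as $s\downarrow0$ with $t$ fixed; the paper never bounds $\partial_i\Lambda$ pointwise but uses them only through the homogeneity identity \eqref{e:logMean:homoIdent} and through products with $(a-b)$ as in Lemma~\ref{lem:kappa:two-point}.)

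The missing idea is the compensation mechanism that replaces your Cauchy--Schwarz step. In the paper, the $\hat L\Lambda$-term is estimated as $\tI\ge\tI_1-\tfrac{d\,\bar a\bar b}{2}\cA(\mu,\psi)$, where the nonnegative part $\tI_1$ is \emph{not} discarded: introducing the tilted measure $\bar\mu^{b,a}_x=\mu_x b(\mu_x)/a(\mu_x)$, using the one-homogeneity of $\Lambda$ and the logarithmic-mean inequality \eqref{e:logMean:FM15} from \cite[Lemma A.2]{FM16}, one shows $\tI_1\ge\underline a\,\underline b\,\tI_2-\tfrac{d\,\bar a\bar b}{2}\cA(\mu,\psi)$. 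It is precisely this reserve that absorbs the otherwise uncontrollable off-diagonal form $\tI_2$ generated by $\tII$ and $\tIV$. Accordingly, the role of $\lambda$ is not ``the size of the corrections'' as you describe: the paper splits $\tI_1=(1-\lambda)\tI_1+\lambda\tI_1$ and spends only the fraction $\lambda\tI_1$ on absorbing $\tI_2$; hypothesis \eqref{e:ZRP:lambda}, $\lambda\le1$, is exactly the condition that this budget suffices (it makes the total coefficient of $\tI_2$ nonnegative), and it is \emph{not} a positivity condition on the bracket in \eqref{e:ZRP:kappa} --- indeed $\kappa$ may be negative. Without \eqref{e:logMean:FM15}, or an equivalent substitute converting the kept portion of $\tI$ into a dominant multiple of $\tI_2$, your argument cannot close, and the constants in \eqref{e:ZRP:kappa} (in particular the term $(1+\lambda)\tfrac{\bar a\bar b}{2}$, whose two halves come from the two applications of this mechanism) cannot be reproduced.
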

\begin{proof}
First, we evaluate some of the quantities occurring in the derivation of the curvature estimate. Let us start with~\eqref{e:curv:Lap}, for which we have
\begin{equation}\label{e:ZRP:Lap}
  L_{\mu} \psi(x) = \sum_{y} \bra*{\psi_y -\psi_x} b(\mu_x) a(\mu_y) \; 
\end{equation}
and
\begin{equation*}
  \hat L_{\mu} \sigma(y) = \sum_{x} \bra[\big]{\sigma_x b(\mu_x) a(\mu_y) - \sigma_y b(\mu_y) a(\mu_x)} \;. 
\end{equation*}
The next quantity~\eqref{e:curv:DQ} becomes
\begin{equation}\label{e:ZRP:DQ}
  DQ(\mu,\sigma;x,y) = \sum_{z} \partial_{\mu_z} b(\mu_x) a(\mu_y) \sigma_z = b'(\mu_x) a(\mu_y) \sigma_x +  b(\mu_x) a'(\mu_y) \sigma_y \;. 
\end{equation}
The last quantity is~\eqref{e:curv:M}
\begin{align*}
  M(\mu;z,w,x,y) = \frac{1}{2}\mu_x \Lambda(\mu)(z,w) \Bigl( & \delta_{z,x} b'(\mu_x) a(\mu_y) + \delta_{z,y} b(\mu_x) a'(\mu_y) \notag \\
  & - \delta_{w,x} b'(\mu_x) a(\mu_y) - \delta_{w,y} b(\mu_x) a'(\mu_y)  \Bigr)  
\end{align*}
from which after symmetrization, we obtain the identity 
\begin{equation}\label{e:ZRP:M2}
  M(\mu) \nabla\psi(x,y) = \mu_x \sum_{z} \bra*{ \nabla\psi(x,z) \Lambda(\mu)(x,z) b'(\mu_x) a(\mu_y) + \nabla\psi(y,z) \Lambda(y,z) b(\mu_x) a'(\mu_y)}
\end{equation}
We will use the following identity for the logarithmic mean
\begin{equation}\label{e:logMean:homoIdent}
  s \partial_1\Lambda(s,t) + t \partial_2\Lambda(s,t) = \Lambda(s,t) \;.
\end{equation}
To compensate off-diagonal terms, we need the following estimate for the logarithmic mean~\cite[Lemma A.2]{FM16}
\begin{equation}\label{e:logMean:FM15}
  r\bra*{\partial_1\Lambda(s,t) + \partial_2\Lambda(s,t)} + \Lambda(s,t) \geq \Lambda(r,s) + \Lambda(r,t) \;. 
\end{equation}
The above basic identities shall be used to estimate the four terms in~\eqref{e:curv:B}, which we denote by $\tI, \tII, \tIII$ and $\tIV$ in this order of occurrence.

\medskip

\emph{First term $\tI$:} Let us start estimating the first term in~\eqref{e:curv:B} and use the identity~\eqref{e:logMean:homoIdent} 
\begin{align}
 \tI &= \frac{1}{4} \sum_{x,y} \abs{\nabla \psi(x,y)}^2 \Bigl(  \partial_1\Lambda(\mu)(x,y) \sum_{z} \bra[\big]{\mu_z b(\mu_z) a(\mu_x) - \mu_x b(\mu_x) a(\mu_z)} b(\mu_x) a(\mu_y) \notag\\
 &\qquad\qquad\qquad\qquad\qquad + \partial_2\Lambda(\mu)(x,y) \sum_{z} \bra[\big]{\mu_z b(\mu_z) a(\mu_y) - \mu_y b(\mu_y) a(\mu_z)} b(\mu_y) a(\mu_x) \Bigr) \notag\\
 &= \frac{1}{4} \sum_{x,y,z} \abs{\nabla \psi(x,y)}^2 \mu_z b(\mu_z) a(\mu_x) a(\mu_y) \Bigl(  \partial_1\Lambda(\mu)(x,y) b(\mu_x) + \partial_2\Lambda(\mu)(x,y) b(\mu_y) \Bigr) \notag\\
 &\quad - \frac{1}{4} \sum_{x,y} \abs{\nabla \psi(x,y)}^2 A(\mu) \Bigl(  \partial_1\Lambda(\mu)(x,y) \mu_x b(\mu_x)^2 a(\mu_y)+ \partial_2\Lambda(\mu)(x,y) \mu_y b(\mu_y)^2 a(\mu_x) \Bigr) \notag\\
 &\geq \tI_1 - \frac{\sup_{\mu,x}\set{ b(\mu_x) A(\mu)}}{2} \cA(\mu,\psi) \geq \tI_1 - \frac{d\, \bar{a}\, \bar{b}}{2} \, \cA(\mu,\psi) \label{e:ZRP:I}
\end{align}
where we introduced $A(\mu) = \sum_{z} a(\mu_z)$.  Although $\tI_1$ is
non-negative, we will keep it to compensate for terms from $\tII$ and
$\tIV$. To do so, we compactify notation further by introducing the
tilted measure
\begin{equation*}
  \bar\mu^{b,a}_x = \mu_x \frac{b(\mu_x)}{a(\mu_x)} \ .
\end{equation*}
With this definition and with the one-homogeneity of $\Lambda$, we can rewrite
\[
  \Lambda(\mu)(x,y) = \Lambda\bra[\big]{\mu_x b(\mu_x) a(\mu_y) , \mu_y b(\mu_y) a(\mu_x)} = \Lambda\bra*{\bar\mu^{b,a}_x , \bar\mu^{b,a}_y} a(\mu_x) a(\mu_y)
\]
and likewise for the zero-homogeneous derivatives $(i=1,2)$
\[
  \partial_i\Lambda(\mu)(x,y)=\partial_i\Lambda\bra[\big]{\mu_x b(\mu_x) a(\mu_y) , \mu_y b(\mu_y) a(\mu_x)} = \partial_i\Lambda\bra*{ \bar\mu^{b,a}_x , \bar\mu^{b,a}_y} \;.
\]
With this notation we want to employ the estimate~\eqref{e:logMean:FM15} in the form
\begin{align*}
  \bar\mu^{b,a}_z\bra*{ \partial_1\Lambda\bra*{ \bar\mu^{b,a}_x , \bar\mu^{b,a}_y}+ \partial_2\Lambda\bra*{ \bar\mu^{b,a}_x , \bar\mu^{b,a}_y}} &\geq \Lambda\bra*{\bar\mu^{b,a}_z , \bar\mu^{b,a}_x} + \Lambda\bra*{\bar\mu^{b,a}_z , \bar\mu^{b,a}_y} - \Lambda\bra*{\bar\mu^{b,a}_x , \bar\mu^{b,a}_y} \\
  &= \frac{\Lambda(\mu)(z,x)}{a(\mu_x) a(\mu_z)} + \frac{\Lambda(\mu)(z,y)}{a(\mu_y) a(\mu_z)}  - \frac{\Lambda(\mu)(x,y)}{a(\mu_x) a(\mu_y)}  \ .
\end{align*}
Now, we can bound $\tI_1$ from below by
\begin{align}
  \tI_1 &\geq \underline{b}\; \frac{1}{4} \sum_{x,y,z} \abs{\nabla \psi(x,y)}^2 a(\mu_x) a(\mu_y) a(\mu_z) \bar\mu^{b,a}_z\bra*{ \partial_1\Lambda\bra*{ \bar\mu^{b,a}_x , \bar\mu^{b,a}_y}+ \partial_2\Lambda\bra*{ \bar\mu^{b,a}_x , \bar\mu^{b,a}_y}} \notag\\
  &\geq  \underline{a}\, \underline{b}\; \frac{1}{4} \sum_{x,y,z} \abs{\nabla \psi(x,y)}^2\bra[\big]{ \Lambda(\mu)(z,x) + \Lambda(\mu)(z,y)} - \frac{d \, \bar{a}\, \underline{b}}{2}\, \cA(\mu,\psi) \notag\\
  &\geq \underline{a}\, \underline{b} \, \tI_{2} - \frac{ d \, \bar{a}\, \overline{b}}{2}\, \cA(\mu,\psi)\;. \label{e:ZRP:I2}
\end{align}
\medskip

\emph{Second term $\tII$:} Let us continue with the second term in~\eqref{e:curv:B} for which we use~\eqref{e:ZRP:Lap}, symmetrize the sum and obtain
\begin{align}
  \tII &= - \ip{ \nabla \psi, \Lambda(\mu) \nabla L_{\mu} \psi} \notag\\
 &= \sum_{x,y,z} \nabla\psi(x,y)\nabla\psi(x,z) \Lambda(\mu)(x,y) b(\mu_x)a(\mu_z) \notag\\
  &= \frac12\sum_{x,y,z} a(\mu_z)\Lambda(\mu)(x,y)\nabla\psi(x,y)\Bigl(\underbrace{\nabla\psi(x,z)}_{\mathclap{=\nabla\psi(x,y)+\nabla\psi(y,z)}}b(\mu_x) - \nabla\psi(y,z)b(\mu_y)\Bigr) \notag\\
  &=  \frac{1}{2} \sum_{x,y} \abs{\nabla\psi(x,y)}^2 \Lambda(\mu)(x,y) b(\mu_x) \sum_z a(\mu_z) \notag\\
  &\quad +  \frac{1}{2} \sum_{x,y,z} \nabla \psi(x,y) \nabla\psi(x,z) \Lambda(\mu)(x,y) a(\mu_z) \bra[\big]{ b(\mu_x) - b(\mu_y)} \notag\\
  &\geq \bra*{ d\,\underline{b}\,\underline{a} - \frac{d \, \bar a\, \Lip b}{2}}\cA(\mu,\psi) \notag\\
  &\quad -  \frac{\bar a\, \Lip b}{8} \sum_{x,y,z} \abs{\nabla\psi(x,y)}^2 \bra[\big]{\Lambda(\mu)(x,z) + \Lambda(\mu)(y,z)} \notag\\
  &\geq \bra*{ d\,\underline{b}\,\underline{a} - \frac{d \, \bar a\, \Lip b}{2}} \cA(\mu,\psi) -  \frac{\bar a \, \Lip b}{2}I_2  \label{e:ZRP:II} \;.
\end{align}
where we used the Young inequality $uv\leq u^2/2 + v^2/2$.
\medskip

\emph{Third term $\tIII$:} For estimating the third term in~\eqref{e:curv:B}, denoted by $\tIII$, we use~\eqref{e:ZRP:DQ}, do a crude estimate to again apply~\eqref{e:logMean:homoIdent}
\begin{align*}
 \tIII &= \frac{1}{4} \sum_{x,y} \abs*{\nabla\psi(x,y)}^2 \Bigl( \partial_1\Lambda(\mu)(x,y) \mu_x  \bra*{b'(\mu_x) a(\mu_y) L_{\mu}\mu(x) + b(\mu_x) a'(\mu_y) L_{\mu}\mu(y)} \\
 &\qquad\qquad\qquad\qquad\qquad +  \partial_2\Lambda(\mu)(x,y) \mu_y  \bra*{b'(\mu_y) a(\mu_x) L_{\mu}\mu(y) +  b(\mu_y) a'(\mu_x) L_{\mu}\mu(x)} \Bigr) \\
 &\geq \inf_{\mu,x,y}\set*{ \frac{b'(\mu_x) L_{\mu}\mu(x)}{2b(\mu_x)} + \frac{a'(\mu_y) L_{\mu}\mu(y)}{2a(\mu_y)}} \cA(\mu,\psi) \ . 
\end{align*}
To bound the infimum, we observe that
\begin{equation*}
  \abs{L_{\mu} \mu(x)} \leq d\, \bar b \, \bar a \;.
\end{equation*}
Hence, in total we obtain
\begin{equation}
  \tIII \geq - \frac{d\, \bar b \, \bar a}{2}  \bra*{\frac{\Lip b}{\underline{b}} + \frac{\Lip a}{\underline{a}}} \cA(\mu,\psi) \;. \label{e:ZRP:III}
\end{equation}

\medskip

\emph{Fourth term $\tIV$:} For estimating the fourth term in~\eqref{e:curv:B}, denoted by $\tIV$, we use~\eqref{e:ZRP:M2} and compensate it partly by $\tI_2$ from~\eqref{e:ZRP:I2}
\begin{align}
 \tIV &= \frac{1}{2} \sum_{x,y,z} \nabla\psi(x,y) \nabla\psi(x,z) \Lambda(\mu)(x,z) \bra*{\mu_x b'(\mu_x) a(\mu_y) - \mu_y b(\mu_y) a'(\mu_x)} \notag\\
 &\geq - \frac{\bar a \, \Lip b + \bar b \Lip a}{2} \pra*{ d \, \cA(\mu, \psi) + \frac{1}{4} \sum_{x,y,z} \abs*{ \nabla\psi(x,y) }^2 \bra[\big]{ \Lambda(x,z) + \Lambda(y,z)} }  \;. \label{e:ZRP:IV}
\end{align}
\medskip 

\emph{Conclusion: } We combine all the estimates of the individual terms in~\eqref{e:curv:B} from the rewriting $\cB= \tI + \tII + \tIII + \tIV$. There is one small catch. After having applied the first bound~\eqref{e:ZRP:I} to $\tI$, we split for $\lambda \in (0,1)$ the non-negative part $\tI_1$ into $(1-\lambda)\tI_1$ and $\lambda \tI_1$, where only to the second term $\lambda \tI_1$ the bound~\eqref{e:ZRP:I2} is applied. The other three estimates \eqref{e:ZRP:II}, \eqref{e:ZRP:III} and \eqref{e:ZRP:IV} are applied in a straightforward manner to $\tII$, $\tIII$ and $\tIV$, respectively, to arrive at the lower bound 
\begin{align*}
  \cB(\mu,\psi)  &\geq (1-\lambda) \tI_1 + \bra*{\lambda\,\underline{a}\, \underline{b}- \frac{2\,\overline{a} \Lip b + \overline{b} \Lip a}{2} } \tI_2 \\
  &\quad + d\pra*{  \underline{a}\, \underline{b} - (1+\lambda)\frac{\bar{a}\, \bar{b}}{2} -  \frac{\bar a}{2} \bra*{ 2 +  \frac{ \bar b}{\underline{b}}} \Lip b -\frac{\overline{b}}{2} \bra*{1 +  \frac{\bar a}{\underline{a}}} \Lip a } \, \cA(\mu,\psi) \ .
\end{align*}
If $\lambda$ is chosen according
to~\eqref{e:ZRP:lambda} and by $\tI_1 \geq 0$, we arrive at the bound
$\cB(\mu,\psi)\geq \kappa \cA(\mu,\psi)$ with $\kappa$ given
in~\eqref{e:ZRP:kappa}. The final statement~\eqref{e:ZRP:kappa:asymptotic} follows by simple
calculus from the bound $\bar{a} \leq \underline{a} + \Lip a$,
similar for $\bar{b}\leq \underline{b} + \Lip b$ and observing that $\lambda = O(\eta)$ in this case. 
\end{proof}

\vspace{5mm}

\textbf{\underline{Acknowledgments}}: This work was supported by the
PHC Procope project "`Entropic Ricci curvature bounds of interacting
particle systems and their mean-field limits"'. MF was additionally
supported by ANR-11-LABX-0040-CIMI within the program
ANR-11-IDEX-0002-02, as well as Projects EFI (ANR-17-CE40-0030) and
MESA (ANR-18-CE40-006) of the French National Research Agency (ANR).
ME and AS were additionally supported by the German Research
Foundation (DFG) through the ``Hausdorff Center for Mathematics'' and
the CRC 1060 ``The Mathematics of Emergent Effects''.

\bibliographystyle{plain}
\bibliography{bib}

\end{document}